\newtheorem{theorem}{Theorem}
\newtheorem{definition}{Definition}
\newtheorem{corollary}{Corollary}
\newtheorem{lemma}{Lemma}
\numberwithin{theorem}{section}
\begin{document}
\title{On equality of certain automorphism groups}
\author{Surjeet Kour, Vishakha}\thanks{}
\address{Discipline of Mathematics, Indian Institute of Technology, Gandhinagar  382424, India.}
\email{surjeetkour@iitgn.ac.in,  vishakha.maths@iitgn.ac.in} \keywords{Finite group,   p- group, Class preserving automorphism,
Central automorphism} \subjclass[2010]{20D15, 20D45}

\begin{abstract}
Let $G = H\times A$ be a group, where $H$ is a purely non-abelian subgroup of $G$ and $A$ is a non-trivial abelian factor of $G$. Then, for $n \geq 2$, we show that there exists an isomorphism $\phi : Aut_{Z(G)}^{\gamma_{n}(G)}(G) \rightarrow Aut_{Z(H)}^{\gamma_{n}(H)}(H)$ such that $\phi(Aut_{c}^{n-1}(G))=Aut_{c}^{n-1}(H)$.
Also, for a finite non-abelian $p$-group $G$ satisfying a certain natural hypothesis, we give some necessary and sufficient conditions for $Autcent(G) = Aut_c^{n-1}(G)$.
Furthermore, for a finite non-abelian $p$-group $G$ we study the equality of $Autcent(G)$ with $Aut_{Z(G)}^{\gamma_{n}(G)}(G)$.

\end{abstract}

\maketitle

\section{{Introduction}}

Let $G$ be a group with the center $Z(G)$ and the automorphism group $Aut(G)$.
For an abelian group $H$,  $Hom(G, H)$ denotes the abelian group of all homomorphisms from $G$ to $H$.
For a given group $G$ along with some nice properties, it would be interesting to examine the properties of
its automorphism group and to determine relations among various subgroups of its automorphism group.
So it is  natural  to look for appropriate conditions on a group $G$ such that one can describe its automorphism group
$``Aut(G)"$ or could find out certain subgroups of $Aut(G)$ are isomorphic.
Also, the group of homomorphisms is far better understood than the group of automorphisms and therefore it would be in a great interest of many mathematicians
to see isomorphisms between subgroups of an automorphism group and suitable homomorphism groups. Recently, many mathematicians got interested to study the equality of certain subgroups of an automorphism group and a number of results have been proved in this direction.
Many interesting results have been proved in \cite{DH}, \cite{Manoj}, \cite{Kalra}, \cite{Yadav} etc..

Most of the results, which have been proved in this direction, are either for a finite group or for a finite non-abelian $p$-group of nilpotency
class at most two. Our main motivation behind this work is to study relations among certain subgroups of the automorphism group of two groups (need not be finite)
and to study the relations among certain subgroups of the automorphism group of a finite non-abelian $p$ group.  In Section \ref{s3} and \ref{s4}, we obtain quite a few interesting results for infinite groups and for finite non-abelian $p$-groups. These results are generalizations of some results which were proved
either for a finite group or for a finite non-abelian $p$-group of class at most two.

An automorphism $\alpha$ of $G$ is called central if $g^{-1}\alpha(g) \in Z(G)$ for all $g\in G$.
The set of all central automorphisms, denoted by $Autcent(G)$, is a normal subgroup of $Aut(G)$.
 In \cite{Yen}, Adney and Yen have obtained a very
interesting relation between the central automorphism group $Autcent(G)$ of a finite non-abelian group G and the homomorphism
group Hom(G, Z(G)). They proved that, for a purely non-abelian finite group G (a finite non-abelian group $G$ is said to be purely non-abelian, if it has no non-trivial abelian direct factor), there exists a
bijection between Autcent(G) and Hom(G, Z(G)). This result is being used extensively in the study of central automorphism group
and it is also referred at various places in this article.


Let $M$ and $N$ be two normal subgroups of $G$. Let $Aut^{M}(G)$ denote the group of all automorphisms of $G$ which fix $M$ setwise and act trivially on $G/M$ and let $Aut_{N}(G)$ denote the group of all automorphisms of $G$ which fix  $N$ element wise. The group $Aut^{M}(G)\cap Aut_{N}(G)$ is denoted by $Aut_{N}^{M}(G)$. Note that $ Autcent(G) =  Aut_{\gamma_2(G)}^{Z(G)}(G)$. %

\begin{definition}
An automorphism $\beta$ of $G$ is called $n^{th}$ class preserving if for each $g\in G$, there exists $x\in \gamma_{n}(G)$ such that $\beta(g)= x^{-1}gx$,
where $\gamma_n(G)$ denotes the $n^{th}$ term of lower central series.
We denote the group of all $n^{th}$ class preserving automorphisms by $Aut_{c}^{n}(G)$.
\end{definition}
Note that $Aut_c^1(G)$ is denoted by $Aut_c(G)$ and called the group of all class preserving automorphisms.

Let $H_1, H_2, \ldots, H_k$ be $k$ groups and let $G$ be the direct product of $H_i's$.
In Section \ref{s3}, we obtain relations between subgroups of $Aut(G)$ and subgroups of $Aut(H_j)$, which fix certain subgroups and quotient groups of the group $G$ and the group $H_j$. More precisely, in Theorem \ref{t2} we prove that for two normal subgroups $M_j$ and  $N_j$ of $H_j$, if $M=\{1\} \times \cdots \times M_j \times \cdots \times \{1\}$ and $N  =H_1 \times \cdots \times N_j \times \cdots \times H_k$, then $Aut_{N}^{M}(G)$ is isomorphic to $Aut_{N_j}^{M_j}(H_j)$.

As a consequence of Theorem \ref{t2}, we  obtain Theorem \ref{c1} which states that if $ G= H \times A$ is  a  non-abelian group, where $H$ is a purely non-abelian subgroup of $G$ and $A$ is a non-trivial abelian factor of $G$, then there exists an isomorphism
$\phi : Aut_{Z(G)}^{\gamma_{n}(G)}(G) \rightarrow Aut_{Z(H)}^{\gamma_{n}(H)}(H)$ such that $\phi(Aut_{c}^{n-1}(G))=Aut_{c}^{n-1}(H)$.

In \cite[Theorem 4.1]{Manoj}, Yadav proved that if $G$ and $H$ are two finite non-abelian isoclinic
groups, then $Aut_c(G)$ is isomorphic to $Aut_c(H)$. Later on,  Rai in \cite[Theorem A]{Rai} has extended Yadav's
above result to the group $Aut_{Z(G)}^{\gamma_2(G)}(G)$, which states that if two finite non-abelian groups $G$ and $H$ are isoclinic,
then there exists an isomorphism $ \phi : Aut_{Z(G)}^{\gamma_2(G)}(G)\rightarrow Aut_{Z(H)}^{\gamma_2(H)}(H)$ such that
$ \phi(Aut_c(G))= Aut_c(H)$.

When $G = H \times A$, Theorem A of Rai \cite{Rai} , the extension of Yadav \cite[Theorem 4.1]{Manoj},  can be obtained from  Theorem \ref{c1} by
taking $G$ to be a finite group and $n=2$.

In the same article \cite{Manoj}, Yadav  also proved that for a finite nilpotent group $G$ of class two, the group of all class preserving automorphisms
is isomorphic to a subgroup of $ Hom(G/Z(G) , \gamma_2(G))$.
In Section \ref{s3}, Theorem \ref{t3}, we generalize this result and prove that if $G$ is a finite nilpotent group of class at most $n$ and $H$ is a subgroup of
$G$ such that $ \gamma_n(G) \subseteq H \subseteq Z(G)$, then the group of $(n-1)^{th}$ class preserving automorphism is isomorphic
to a subgroup of $ Hom(G/H , \gamma_n(G))$. Yadav's above result can be obtained from Theorem \ref{t3} by taking $n=2$ and $ H= Z(G)$.

In Section \ref{s4}, we  obtain various equalities between the group of central automorphisms of a finite non-abelian $p$-group $G$ and suitable subgroups of the
automorphism group $Aut(G)$ by putting some conditions on the group $G$.

In \cite[Theorem 3.1]{Kalra}, Kalra and Gumber proved that for a finite $p$-group $G$, $ Autcent(G) = Aut_c(G)$ if and only if $Aut_{c}(G)$ is isomorphic to $Hom(G/Z(G),\gamma_{2}(G))$ and $\gamma_{2}(G) = Z(G)$.
In Theorem \ref{t4}, we prove that  for a finite non-abelian $p$-group $G$ and for $n \geq 2$, if $Autcent(G) =Aut_{c}^{n-1}(G) $ then $\gamma_{n}(G)\leq Z(G)$ and $d(Z(G)) = d (\gamma_{n}(G))$ also if $exp(G/\gamma_2(G))>var(\gamma_{n}(G),Z(G))$ then $\gamma_{n}(G)=Z(G)$ and $Aut_{c}^{n-1}(G) $ is isomorphic to $Hom(G/Z(G), \gamma_n(G))$. We obtain Kalra and Gumber \cite[Theorem 3.1]{Kalra} as  Corollary \ref{c3} of Theorem \ref{t4}.

In \cite[Theorem B(1)]{Rai},  Rai also studied the equality of the group of central automorphisms with $Aut_{Z(G)}^{\gamma_{2}(G)}(G)$. He proved that for a finite non-abelian $p$-group $G$ of nilpotency class two, $Autcent(G) = Aut_{Z(G)}^{\gamma_{2}(G)}(G)$ if and only if $\gamma_{2}(G) = Z(G)$.
In Theorem \ref{t5}, for $n\geq 2$,  we prove that for a finite non-abelian $p$-group $G$, if $ Autcent(G) = Aut_{Z(G)}^{\gamma_{n}(G)}(G)$ then
$\gamma_{n}(G)\leq Z(G)$ and $d(Z(G)) = d (\gamma_{n}(G))$ also if $exp(G/\gamma_2(G))> var(\gamma_{n}(G),Z(G))$ then $\gamma_{n}(G) = Z(G)$, which generalizes Rai's above result.

%

\section{{Notations and Preliminaries}}\label{s2}

Throughout the article, $n$ denotes a positive integer and $p$ denotes a prime number. We use $1$ to denote the identity of a group. For basic definitions we refer \cite{Dummit}. Here we recall some definitions:

The exponent of a group $G$, denoted by $exp(G)$, is the smallest positive integer $n$ such that $g^{n} = 1$ for all $g \in G$.
For a finite abelian group $G$,  $d(G)$ denotes the rank of $G$. Let $G$ and $H$ be two finite abelian $p$-groups. Let
$$G =  C_{p^{n_1}}\times C_{p^{n_2}}\times \cdots \times C_{p^{n_s}},$$  $$H = C_{p^{m_1}}\times C_{p^{m_2}}\times \cdots \times C_{p^{m_t}}$$ be the cyclic decompositions of $G$ and $H$ respectively, where $n_i \geq n_{i+1}$ and $m_j \geq m_{j+1}$ are positive integers and $C_{p^{i}}$ denotes the cyclic group of order $p^i$. If $d(G) = d(H)$, that is $s =t$, and $G$ is a proper subgroup of $H$, then there exists a  unique integer $r$ such that $n_r < m_r$ and $ n_j= m_j$ for all $ r+1 \leq j \leq t$.  In this case, we define $var(G,H)= p^{n_r}$. In other words, $var(G,H)$ denotes the order of the last cyclic factor of $G$ whose order is less than that of the corresponding cyclic factor of $H$. If $ G = H$, then we say $var(G,H) =1$.



The following lemmas are  well known.

\begin{lemma}\label{l3}
Let $G$ be a nilpotent group of class $n$. Then $exp(G/Z (G))$ divides $exp(\gamma_n(G))$. Furthermore, if $n=2$, then $exp(G/Z(G))= exp(\gamma_2(G))$.
\end{lemma}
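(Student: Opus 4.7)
The statement splits into the general divisibility for class $n$ and the sharper equality for $n=2$. I would handle the $n=2$ clause first, since its proof is elementary and motivates the approach for general $n$.

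For $n=2$, the group $G$ has class $2$, so $\gamma_2(G) \leq Z(G)$ and every commutator is central. The iterated commutator expansion
\[
[g^k, h] = \prod_{i=0}^{k-1}[g,h]^{g^i}
\]
then collapses, because each conjugate $[g,h]^{g^i}$ equals $[g,h]$, yielding the bilinearity identity $[g^k, h] = [g,h]^k$ for all $g, h \in G$ and $k \in \mathbb{Z}$. From this, setting $e := \exp(\gamma_2(G))$ gives $[g^e, h] = [g,h]^e = 1$ for every $h$, so $g^e \in Z(G)$ and $\exp(G/Z(G)) \mid e$. Conversely, setting $m := \exp(G/Z(G))$, we have $g^m \in Z(G)$, so $[g,h]^m = [g^m, h] = 1$ for all $g, h$; since $\gamma_2(G)$ is abelian and generated by commutators, this forces $\exp(\gamma_2(G)) \mid m$. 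The two divisibilities yield the desired equality.

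For the general divisibility $\exp(G/Z(G)) \mid \exp(\gamma_n(G))$, let $e := \exp(\gamma_n(G))$; the goal is to show $[g^e, h] = 1$ for all $g, h \in G$, which would give $g^e \in Z(G)$. I would expand $[g^e, h]$ using the Hall--Petresco (commutator collection) formula, which in a group of class $n$ yields an identity of the form
\[
[g^k, h] = [g, h]^k \cdot c_2^{\binom{k}{2}} \cdot c_3^{\binom{k}{3}} \cdots c_{n-1}^{\binom{k}{n-1}},
\]
where each $c_j$ is a specific basic commutator of weight $j+1$ in $g, h$ (so in particular $c_{n-1} = [g, h, g, \ldots, g] \in \gamma_n(G)$ has order dividing $e$). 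Substituting $k = e$ kills the rightmost factor outright, and the plan is to show every earlier factor also evaluates to $1$ by combining the orders of the $c_j$ with divisibility properties of the binomial coefficients $\binom{e}{j}$.

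The main obstacle is the intermediate factors $c_j$ for $2 \leq j \leq n-2$, which lie in $\gamma_{j+1}(G)$ rather than $\gamma_n(G)$ and so have orders not directly bounded by $e$. To handle this, I would argue by induction on the class, passing to the quotient $G/\gamma_n(G)$ (which has class $n-1$) to control the intermediate $c_j$'s, and then bootstrap via the already-proved class-$2$ case applied to suitable two-step quotients of $G$. The delicate step is reassembling these pieces so that the resulting conclusion is genuinely about $Z(G)$ rather than merely a higher term of the upper central series, and this is where the bulk of the work would lie.
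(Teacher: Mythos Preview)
The paper gives no proof of this lemma at all; it is simply listed among several ``well known'' facts, and in the sequel only the $n=2$ equality is actually used (in the proofs of Corollaries~\ref{c3} and~\ref{c4}). Your argument for the class-$2$ case is correct and is the standard one.

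For general $n\ge 3$, however, the first sentence of the lemma is \emph{false} as stated, so no proof strategy can succeed. Take $G=UT_4(\mathbb{F}_2)$, the unipotent upper-triangular $4\times 4$ matrices over $\mathbb{F}_2$. This group has nilpotency class~$3$, with $\gamma_3(G)=Z(G)=\langle I+E_{14}\rangle\cong C_2$, so $\exp(\gamma_3(G))=2$. But for $g=I+E_{12}+E_{23}+E_{34}$ one computes $g^2=I+E_{13}+E_{24}\notin Z(G)$, hence $\exp(G/Z(G))\ge 4$, and $4\nmid 2$.

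The obstacle you yourself identified---that the Hall--Petresco expansion of $[g^e,h]$ only forces $g^e$ into a higher term $Z_{n-1}(G)$ of the upper central series rather than into $Z(G)$---is therefore not a technical wrinkle to be smoothed over but a genuine obstruction. What \emph{is} true, via the multilinearity of the $n$-fold commutator map $G^{\,n}\to\gamma_n(G)$ in a class-$n$ group, is $\exp\bigl(G/Z_{n-1}(G)\bigr)\mid\exp(\gamma_n(G))$; for $n=2$ this collapses to the stated equality, but for $n\ge 3$ it is strictly weaker than what the lemma asserts.
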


\begin{lemma}\label{l4}
Let $G$ be a group and let  $H$, $K$ be two normal subgroups of $G$ such that $H\subseteq K$. Then $exp(G/K)$ divides $exp(G/H)$.
\end{lemma}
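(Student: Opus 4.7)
The plan is to reduce the statement to the well-known fact that the exponent of a homomorphic image divides the exponent of the source. Since $H$ and $K$ are both normal in $G$, the quotients $G/H$ and $G/K$ are groups. Because $H \subseteq K$, the assignment $gH \mapsto gK$ is a well-defined surjective homomorphism $\pi : G/H \to G/K$: well-definedness is precisely the implication $g^{-1}g' \in H \Rightarrow g^{-1}g' \in K$, which is immediate.

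The one computation needed is the following. Assume $exp(G/H) = m$ is finite (otherwise the divisibility statement is vacuous). Then for every $g \in G$ we have $(gH)^{m} = g^{m}H = H$, i.e.\ $g^{m} \in H \subseteq K$, and hence $(gK)^{m} = g^{m}K = K$. Since this holds for every $g \in G$, every element of $G/K$ has order dividing $m$. Therefore $exp(G/K)$ is finite and $exp(G/K) \mid m = exp(G/H)$, which is exactly the claim.

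I do not anticipate any real obstacle: the proof is an immediate exercise in coset arithmetic, using nothing beyond the existence and surjectivity of the canonical projection $G/H \to G/K$. The only mild point worth being explicit about is the convention in the infinite-exponent case, which I handle by treating it as vacuous.
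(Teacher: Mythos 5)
Your proof is correct and is exactly the standard argument; the paper states this lemma as well known and gives no proof, so there is nothing to compare against beyond noting that your coset computation ($g^{m}\in H\subseteq K$, hence every element of $G/K$ has order dividing $exp(G/H)$) is the expected one, and your handling of the infinite-exponent case as vacuous is a reasonable convention.
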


%
\begin{lemma}\label{l5}
Let $G=H\times K$, where $H$ and $K$ are two groups. If $K$ is abelian, then $G$ is nilpotent  of class $n$ if and only if $H$ is nilpotent of class $n$.
\end{lemma}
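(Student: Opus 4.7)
The plan is to reduce everything to the well-known identity $\gamma_i(H\times K) = \gamma_i(H)\times \gamma_i(K)$ for each $i\geq 1$, and then exploit the fact that $K$ being abelian kills its lower central factors from $i=2$ onward.

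First I would establish the direct-product formula for the lower central series by induction on $i$. The base case $i=1$ is simply $G = H\times K$. For the inductive step, assuming $\gamma_i(G) = \gamma_i(H)\times \gamma_i(K)$, I would use the fact that a generator of $\gamma_{i+1}(G)$ has the form $[(h,k),(h',k')]$ with $(h,k)\in G$ and $(h',k')\in \gamma_i(G)$; componentwise evaluation of the commutator yields $([h,h'],[k,k'])$, which lies in $\gamma_{i+1}(H)\times \gamma_{i+1}(K)$, and conversely every such pair is a product of such commutators. This is the routine part; no real obstacle here.

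Once the formula is in hand, since $K$ is abelian we have $\gamma_2(K)=1$, hence $\gamma_i(K)=1$ for every $i\geq 2$. Therefore for $i\geq 2$,
\[
\gamma_i(G) \;=\; \gamma_i(H)\times\{1\} \;\cong\; \gamma_i(H).
\]
In particular $\gamma_i(G)=1$ if and only if $\gamma_i(H)=1$.

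To finish, I would unpack the definition of nilpotency class: $G$ is nilpotent of class exactly $n$ iff $\gamma_{n+1}(G)=1$ and $\gamma_n(G)\neq 1$. By the displayed equivalence (applied with $i=n+1$ and $i=n$, both at least $2$ since $n\geq 1$; the trivial case $n=0$ where $G$ and $H$ are both abelian is immediate), this happens precisely when $\gamma_{n+1}(H)=1$ and $\gamma_n(H)\neq 1$, i.e.\ when $H$ has class exactly $n$. There is no genuine obstacle in this argument; the only care needed is to separate the ``class $\leq n$'' and ``class $\geq n$'' halves so that both directions of the biconditional are handled, and to note the degenerate case $n=1$ (where $\gamma_2(G)=\gamma_2(H)\times\{1\}=1$ iff $H$ is abelian).
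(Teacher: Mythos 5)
The paper records this lemma as ``well known'' and supplies no proof, so there is nothing to compare against; your argument via the identity $\gamma_i(H\times K)=\gamma_i(H)\times\gamma_i(K)$ together with $\gamma_i(K)=1$ for $i\geq 2$ is the standard one and is correct. The only caveat is the wholly degenerate case $H=\{1\}$, $K$ nontrivial (where $G$ has class $1$ but $H$ has class $0$), which falls outside the lemma's intended use in the paper since there $H$ is always non-abelian.
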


For a purely non-abelian finite group $G$,  Adney and Yen proved the following result.
\begin{theorem}\cite[Theorem 1]{Yen}\label{t1}
Let $G$ be a purely non-abelian finite group. Then $|Autcent(G)|$ $=|Hom(G,Z(G))|$.
\end{theorem}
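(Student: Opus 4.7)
The plan is to construct an explicit bijection between $Autcent(G)$ and $Hom(G,Z(G))$. For each central automorphism $\alpha$, set $f_\alpha(g)=g^{-1}\alpha(g)$. Since $\alpha$ is central, $f_\alpha(g)\in Z(G)$ for every $g$, and the centrality of the image makes $f_\alpha$ a homomorphism: for $g,h\in G$,
\[
f_\alpha(gh)=(gh)^{-1}\alpha(g)\alpha(h)=h^{-1}\bigl(g^{-1}\alpha(g)\bigr)\alpha(h)=\bigl(g^{-1}\alpha(g)\bigr)\bigl(h^{-1}\alpha(h)\bigr),
\]
where the last step uses that $g^{-1}\alpha(g)\in Z(G)$ commutes with $h^{-1}$. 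The assignment $\Phi:\alpha\mapsto f_\alpha$ is clearly injective, since $\alpha$ can be recovered from $f_\alpha$ via $\alpha(g)=g\cdot f_\alpha(g)$.

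Conversely, for each $f\in Hom(G,Z(G))$ I would define $\beta_f:G\to G$ by $\beta_f(g)=gf(g)$. An analogous computation, again exploiting that $f(g)\in Z(G)$, shows $\beta_f$ is an endomorphism; since $G$ is finite, $\beta_f\in Aut(G)$ iff $\beta_f$ is injective, and in that case $\beta_f$ is central with $\Phi(\beta_f)=f$. Thus the theorem reduces to the claim that when $G$ is purely non-abelian, every $f\in Hom(G,Z(G))$ yields an injective $\beta_f$. Note first that $\ker\beta_f\subseteq Z(G)$: if $gf(g)=1$ then $g=f(g)^{-1}\in Z(G)$. Hence the question collapses to showing $\{z\in Z(G):f(z)=z^{-1}\}=\{1\}$.

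The main obstacle, and the step that really uses the purely non-abelian hypothesis, is the contrapositive statement: if there exists $z\in Z(G)\setminus\{1\}$ with $f(z)=z^{-1}$, then $G$ admits a nontrivial abelian direct factor, contradicting the hypothesis. My plan for this step is first to replace $z$ by a suitable power so that $z$ has prime order $p$ (using $f(z^m)=f(z)^m$), so $\langle z\rangle\cong C_p$ is a cyclic central subgroup. Then, using the structure theorem for finite abelian groups to split off $\langle z\rangle$ as a direct summand of $Z(G)$, and using $f$ to construct a retraction, one produces a central subgroup $A\supseteq\langle z\rangle$ together with a homomorphism $\pi:G\to A$ that is the identity on $A$. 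This exhibits $G=A\times\ker\pi$ with $A$ a nontrivial abelian direct factor, which is the sought contradiction.

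Once this lemma is in hand, $\beta_f$ is an automorphism for every $f\in Hom(G,Z(G))$, so $\Phi$ is a bijection between $Autcent(G)$ and $Hom(G,Z(G))$, and the equality of cardinalities follows. I expect the technical heart of the argument to be the construction of the retraction $\pi$ in the last paragraph: it is here that one must cleanly combine the internal structure of $Z(G)$ as an abelian group with the hypothesis that $f$ extends to all of $G$, in order to force an abelian direct factor out of the single relation $f(z)=z^{-1}$.
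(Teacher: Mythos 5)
The paper does not actually prove this statement --- it is quoted from Adney and Yen --- so there is no internal proof to compare against, and I am judging your argument on its own terms. Your reduction is the standard one and is carried out correctly: $\Phi(\alpha)=f_\alpha$ lands in $Hom(G,Z(G))$ and is injective, and surjectivity of $\Phi$ is equivalent to the assertion that $\beta_f(g)=gf(g)$ is injective (hence an automorphism) for every $f\in Hom(G,Z(G))$, which you correctly reduce to showing that $\ker\beta_f=\{z\in Z(G):f(z)=z^{-1}\}$ is trivial. The gap is in the one step you yourself flag as the ``technical heart'': producing an abelian direct factor from a nontrivial $z$ with $f(z)=z^{-1}$. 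The route you sketch does not work. First, $\langle z\rangle$ need not be a direct summand of $Z(G)$ even after passing to an element of prime order (take $z$ of order $p$ inside $Z(G)\cong C_{p^2}$), so the structure theorem does not ``split it off''. Second, and more seriously, even a genuine direct summand of $Z(G)$ is usually not a direct factor of $G$ (e.g.\ $Z(Q_8)$ itself), so the existence of a retraction $\pi:G\to A$ defined on \emph{all} of $G$ is precisely the hard content of the theorem; nothing in your sketch indicates how $f$ yields such a $\pi$, and the obvious candidates such as $g\mapsto f(g)^{\pm 1}$ are not the identity on their images.

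The step can be completed, but by a global Fitting-type argument on the endomorphism $\beta_f$ rather than by local surgery around $z$. Since $f(G)\subseteq Z(G)$, one checks by induction that $\beta_f^{m}(g)\in gZ(G)$ for all $m$, so every $\ker\beta_f^{m}$ is contained in $Z(G)$. Choose $n$ with $H:=\beta_f^{n}(G)=\beta_f^{2n}(G)$ and $K:=\ker\beta_f^{n}=\ker\beta_f^{2n}$. Then $H\cap K=1$ (if $\beta_f^{n}(g)\in K$ then $g\in\ker\beta_f^{2n}=\ker\beta_f^{n}$, so $\beta_f^{n}(g)=1$), $|H|\,|K|=|G|$, $K$ is central and $H$ is normal because $G=HZ(G)$; hence $G=H\times K$ with $K$ abelian and $K\supseteq\ker\beta_f\ni z$ nontrivial, contradicting the hypothesis that $G$ is purely non-abelian. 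With this lemma inserted your proof is complete, and the preliminary reduction to $z$ of prime order becomes unnecessary.
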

Note that $Hom(G,Z(G))$ is isomorphic to $Hom(G/\gamma_i(G),Z(G))$ for all $i\geq 2$. So, for $ i \geq 2$ , we have $$|Autcent(G)|=|Hom(G/\gamma_i(G),Z(G))|.$$

The following lemmas are proved in  \cite{Bid} and \cite{DH}.

\begin{lemma}\cite[Corollary 3.3]{Bid}\label{l6}
Let $G = H \times K$, where $H$ and $K$ have no common direct factor. Then $$|Autcent(G)|=|Autcent(H)||Autcent(K)||Hom(H, Z(K))||Hom (K, Z(H ))|.$$
\end{lemma}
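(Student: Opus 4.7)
The plan is to establish a bijection between $Autcent(G)$ and the Cartesian product $Autcent(H) \times Autcent(K) \times Hom(H, Z(K)) \times Hom(K, Z(H))$ via a ``block'' decomposition of each central automorphism, analogous to viewing endomorphisms of $H \times K$ as $2 \times 2$ matrices. The no-common-direct-factor hypothesis will be essential for showing the diagonal components are genuine automorphisms rather than merely central endomorphisms.

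First I would take any $\alpha \in Autcent(G)$ and define four maps by projecting its images: writing $\alpha(h, 1) = (\phi(h), \sigma(h))$ and $\alpha(1, k) = (\psi(k), \tau(k))$, we obtain homomorphisms $\phi : H \to H$, $\tau : K \to K$, $\sigma : H \to K$ and $\psi : K \to H$. The centrality condition $\alpha(g)g^{-1} \in Z(G) = Z(H) \times Z(K)$, applied separately to $(h, 1)$ and $(1, k)$, forces $\phi(h)h^{-1} \in Z(H)$, $\tau(k)k^{-1} \in Z(K)$, $\sigma(H) \subseteq Z(K)$ and $\psi(K) \subseteq Z(H)$. Since $(h, 1)$ and $(1, k)$ commute in $G$, so do their images under $\alpha$, and this yields the formula $\alpha(h, k) = (\phi(h)\psi(k), \sigma(h)\tau(k))$. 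Hence $\alpha$ is determined by the 4-tuple $(\phi, \sigma, \psi, \tau)$, giving injectivity of the correspondence.

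For the converse, given any tuple $(\phi, \sigma, \psi, \tau)$ with $\phi \in Autcent(H)$, $\tau \in Autcent(K)$, $\sigma \in Hom(H, Z(K))$ and $\psi \in Hom(K, Z(H))$, I would define $\alpha$ by the same formula, verify that it is a well-defined homomorphism (using that the off-diagonal maps land in the centers, so the relevant factors commute in the computation of $\alpha(h_1h_2, k_1k_2)$), and check centrality directly from the containments above. The principal obstacle, and the place where the hypothesis is actually used, is to show that for $\alpha \in Autcent(G)$ the diagonal components $\phi$ and $\tau$ are automorphisms, and dually that any 4-tuple with automorphism diagonals produces a bijective $\alpha$. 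I would handle both directions simultaneously by applying the same decomposition to $\alpha^{-1} \in Autcent(G)$ to obtain a second 4-tuple $(\phi', \sigma', \psi', \tau')$, then unpack the identities $\alpha\alpha^{-1} = \alpha^{-1}\alpha = \mathrm{id}_G$ into relations among these eight maps. Without the hypothesis that $H$ and $K$ share no direct factor, a shared abelian summand would allow mixing the two components so that $\phi$ or $\tau$ fails to be invertible on its own factor; the hypothesis is precisely what rules this out and forces the desired invertibility. Counting the 4-tuples then yields the stated product formula.
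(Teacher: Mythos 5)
The paper does not prove this lemma at all --- it is imported verbatim as \cite[Corollary 3.3]{Bid}, so there is no internal proof to compare against. Your matrix-style decomposition $\alpha \mapsto (\phi,\sigma,\psi,\tau)$ with $\alpha(h,k)=(\phi(h)\psi(k),\sigma(h)\tau(k))$ is indeed the framework Bidwell--Curran--McCaughan use, and the easy parts of your argument are fine: centrality does force $\sigma(H)\subseteq Z(K)$, $\psi(K)\subseteq Z(H)$, $h^{-1}\phi(h)\in Z(H)$, $k^{-1}\tau(k)\in Z(K)$, and the tuple determines $\alpha$.

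The gap is that you never actually prove the one statement on which the whole count rests. Unpacking $\alpha^{-1}\circ\alpha=\mathrm{id}$ as you propose gives, in the first coordinate, $\phi'(\phi(h))=h\,\theta(h)^{-1}$ where $\theta=\psi'\circ\sigma\in Hom(H,Z(H))$ factors through $Z(K)$. So $\phi$ is injective (hence bijective, as $H$ is finite) if and only if the central endomorphism $h\mapsto h\,\theta(h)^{-1}$ is injective --- and symmetrically for $\tau$, and the same issue reappears when you try to show that an arbitrary $4$-tuple with automorphism diagonal reassembles into a \emph{bijective} map. Thus everything reduces to the following nontrivial lemma: if $H$ and $K$ have no common direct factor, then for any $\gamma\in Hom(H,Z(K))$ and $\beta\in Hom(K,Z(H))$ the map $h\mapsto h\,\beta(\gamma(h))$ is an automorphism of $H$. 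Your proposal replaces the proof of this with the assertion that ``a shared abelian summand would allow mixing the two components \dots the hypothesis is precisely what rules this out.'' That is exactly backwards as a logical matter: what must be shown is the contrapositive, namely that a \emph{failure} of injectivity of $h\mapsto h\,\beta(\gamma(h))$ lets you \emph{construct} a common abelian direct summand of $H$ and $K$, and this construction (essentially Lemma 3.1/Theorem 3.2 of \cite{Bid}) is the real content of the result; it takes genuine work with direct decompositions of the abelian groups $\beta(\gamma(H))\leq Z(H)$ and $\gamma(H)\leq Z(K)$. The example $H=K=C_p$ with the swap automorphism (where $\phi$ is trivial) shows the step is not formal, so until that lemma is supplied your argument does not establish the product formula.
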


\begin{lemma}\cite[Lemma 2.2(i)]{DH}\label{l1}
Let $G, H$ and $K$ be finite abelian $p$-groups with $H \subseteq K$. Then the following statements are equivalent:
\begin{enumerate}[$(1)$]
\item $Hom(G, H) = Hom(G, K)$;

\item Either $H = K$ or $d(H) = d(K)$ and $exp(G) \leq var(H, K)$.
\end{enumerate}
\end{lemma}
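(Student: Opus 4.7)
The plan is to reduce the equivalence to a cardinality comparison and then read the two conditions off the standard formula for $|Hom|$ between finite abelian $p$-groups. Post-composition with the inclusion $H\hookrightarrow K$ gives a natural embedding $Hom(G,H)\hookrightarrow Hom(G,K)$; since all groups are finite, condition (1) is equivalent to $|Hom(G,H)|=|Hom(G,K)|$. Writing the cyclic decompositions $G=\bigoplus_i C_{p^{l_i}}$, $H=\bigoplus_j C_{p^{n_j}}$, $K=\bigoplus_j C_{p^{m_j}}$ with non-increasing exponents, and invoking $|Hom(C_{p^a},C_{p^b})|=p^{\min(a,b)}$, this reduces to verifying
\[
\sum_{i,j}\min(l_i,n_j)\;=\;\sum_{i,j}\min(l_i,m_j).
\]

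The structural input I would establish first is that $H\subseteq K$ forces $d(H)\le d(K)$ (compare $H[p]\subseteq K[p]$) and, whenever $d(H)=d(K)$, the invariant-factor dominance $n_j\le m_j$ for every $j$ (argue from $d(p^kH)\le d(p^kK)$ applied at each threshold $k$, then extract the dominance from the resulting inequality on the counting functions). In particular $\min(l_i,n_j)\le\min(l_i,m_j)$ termwise, so equality of the two sums is equivalent to termwise equality, and the lemma becomes the question of exactly when this termwise matching can fail.

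For $(2)\Rightarrow(1)$, the case $H=K$ is immediate. Otherwise set $s=d(H)=d(K)$ and $exp(G)=p^l\le p^{n_r}=var(H,K)$. For $j>r$ we have $n_j=m_j$ so the $(i,j)$-terms coincide; for $j\le r$ the bounds $m_j\ge m_r>n_r$ and $n_j\ge n_r\ge l\ge l_i$ both force $\min(l_i,n_j)=l_i=\min(l_i,m_j)$. Hence every term matches and the sums agree.

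For $(1)\Rightarrow(2)$ I would argue contrapositively, after noting that one may assume $G$ is nontrivial. If $d(H)<d(K)$, an extra invariant factor of $K$ with exponent $\ge 1$ pairs with some $l_i\ge 1$ to contribute $\min(l_i,m_j)\ge 1$, strictly enlarging the $K$-sum, contradicting (1). So $d(H)=d(K)$; assume in addition $H\ne K$ but some $l_{i_0}>n_r$. Then the $(i_0,r)$-term satisfies $\min(l_{i_0},n_r)=n_r<\min(l_{i_0},m_r)$ (since both $l_{i_0}$ and $m_r$ exceed $n_r$), while every other term is only weakly smaller on the $H$-side by the dominance, yielding a strict inequality of sums — again contradicting (1). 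Hence $exp(G)\le var(H,K)$. The main technical step is the invariant-factor dominance $n_j\le m_j$; once that monotonicity is in hand, the rest is a clean case analysis pinching the $\min$'s around the pivot index $r$.
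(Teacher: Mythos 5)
This lemma is imported by the paper from \cite[Lemma 2.2(i)]{DH} with no proof given, so there is no in-paper argument to compare yours against; I can only assess your proposal on its own terms, and it is correct. Reducing (1) to the cardinality identity via the inclusion-induced embedding is legitimate since everything is finite, the formula $|Hom(C_{p^a},C_{p^b})|=p^{\min(a,b)}$ together with additivity of $Hom$ over direct sums gives the exponent sums you write, and the invariant-factor dominance $n_j\le m_j$ does follow from $d(p^kH)\le d(p^kK)$ exactly as you sketch (if $n_j>m_j$, take $k=m_j$ to get $\#\{i:n_i>k\}\ge j>\#\{i:m_i>k\}$, a contradiction). Once dominance gives termwise $\le$, both directions are the clean pinching arguments you describe, and I verified the case analysis around the pivot index $r$: for $(2)\Rightarrow(1)$ the bounds $n_j\ge n_r\ge l\ge l_i$ and $m_j\ge n_j$ force both minima to equal $l_i$ when $j\le r$, and for $(1)\Rightarrow(2)$ the $(i_0,r)$ term is strictly larger on the $K$ side. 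The one soft spot is your aside that ``one may assume $G$ is nontrivial'': that assumption is not harmless for the statement as written, since for trivial $G$ and $H\subsetneq K$ condition (1) holds while (2) fails, so the equivalence is literally false in that degenerate case. This is a defect of the cited statement rather than of your argument (and is irrelevant to the paper's applications, where the source group is $G/\gamma_2(G)$ for a nontrivial nilpotent $G$), but you should either add nontriviality of $G$ as a hypothesis or note explicitly why it can be excluded rather than asserting it can be assumed.
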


\section{{Class Preserving Automorphisms}}\label{s3}

 Let $H_1, H_2, \ldots, H_k$ be $k$ groups (need not be finite) and let $G$ be the direct product of $H_i's$. Let $\pi_j$ denote the projection of $G$ on $H_j$.
 Here we prove some relations among  the subgroups of $Aut(G)$ and $Aut(H_j)$, where $j \in \{1, \ldots, k\}$.

\begin{theorem}\label{t2}
Let $H_1, H_2, \ldots, H_k$ be $k$ groups and let $G = H_1 \times H_2 \times \cdots \times H_k$.  For  a  fixed  $j\in \{1,\ldots ,k\}$,  let $M_j$ and  $N_j$ be two normal subgroups of $H_j$. If  $M=\{1\} \times \cdots \times M_j \times \cdots \times \{1\}$ and $N  =H_1 \times \cdots \times N_j \times \cdots \times H_k$, then

\begin{enumerate}[$(1)$]
\item For each $ f \in Aut_{N}^{M}(G)$ the map $\alpha_f : H_j \rightarrow H_j$ given by $\alpha_{f}(h_j) = \pi_j o f(1,\ldots,h_j,\ldots,1)$
is an automorphism and it belongs to $Aut_{N_j}^{M_j}(H_j)$.
\item The map $\phi :Aut_{N}^{M}(G) \rightarrow  Aut_{N_j}^{M_j}(H_j)$ given by $\phi(f) = \alpha_{f}$ is an isomorphism.
\end{enumerate}
\end{theorem}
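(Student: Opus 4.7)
The plan is to directly unpack the definitions: for $f\in Aut_{N}^{M}(G)$ the condition that $f$ acts trivially on $G/M$ forces $f(g)\in gM$ for every $g\in G$, and $M$ is concentrated in the $j$-th slot. Applied to $g=(1,\ldots,h_j,\ldots,1)$ this gives $f(g)=(1,\ldots,h_jm_j,\ldots,1)$ for some $m_j\in M_j$ depending on $h_j$, so all components except the $j$-th vanish automatically and $\alpha_f(h_j):=\pi_j\circ f(1,\ldots,h_j,\ldots,1)$ lies in $H_j$ and equals $h_jm_j$. This is the single observation that drives everything.

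For part (1) I would then check in turn: $\alpha_f$ is a homomorphism (since $f$ is and the elements $(1,\ldots,h_j,\ldots,1)$ form a subgroup isomorphic to $H_j$ whose image under $f$ still sits in that subgroup by the previous paragraph); $\alpha_f$ is bijective (note that $f^{-1}$ also lies in $Aut_N^M(G)$ because fixing $M$ setwise, acting trivially on $G/M$, and fixing $N$ pointwise all pass to $f^{-1}$, and $\alpha_{f^{-1}}$ is a two-sided inverse for $\alpha_f$); $\alpha_f$ acts trivially on $H_j/M_j$ (immediate from $\alpha_f(h_j)=h_jm_j$) and fixes $M_j$ setwise (if $m_j\in M_j$ then $(1,\ldots,m_j,\ldots,1)\in M$ and $f(M)=M$ gives $\alpha_f(m_j)\in M_j$); and $\alpha_f$ fixes $N_j$ elementwise (because $(1,\ldots,n_j,\ldots,1)\in N$ for $n_j\in N_j$, using $N_i=H_i$ for $i\neq j$, so $f$ fixes it).

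For part (2), $\phi$ is a homomorphism by a one-line computation $\alpha_{fg}(h_j)=\pi_j f g(1,\ldots,h_j,\ldots,1)=\alpha_f(\alpha_g(h_j))$, using that $g(1,\ldots,h_j,\ldots,1)$ ends up of the form $(1,\ldots,\alpha_g(h_j),\ldots,1)$. Injectivity: if $\alpha_f=\mathrm{id}$ then $f$ fixes every element of the $j$-th factor; it already fixes the other factors pointwise (they lie in $N$); and $G$ is generated by these, so $f=\mathrm{id}$. Surjectivity: given $\alpha\in Aut_{N_j}^{M_j}(H_j)$ define $\tilde\alpha(h_1,\ldots,h_j,\ldots,h_k)=(h_1,\ldots,\alpha(h_j),\ldots,h_k)$; verify $\tilde\alpha\in Aut_N^M(G)$ by checking each of the three defining conditions slot by slot, and observe $\phi(\tilde\alpha)=\alpha$.

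There is no serious obstacle; the only mildly delicate point is the interplay between the three defining conditions for $Aut_N^M(G)$ (setwise fixing $M$, trivial action on $G/M$, pointwise fixing $N$) and the fact that $M$ is supported only in coordinate $j$ while $N$ is full in every other coordinate. Keeping this asymmetry straight is what makes the images $f(1,\ldots,h_j,\ldots,1)$ land back in the $j$-th factor and what makes the inverse map well-defined; everything else is bookkeeping.
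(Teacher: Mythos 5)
Your proposal is correct and follows essentially the same route as the paper: the same key observation that $f(1,\ldots,h_j,\ldots,1)=(1,\ldots,h_jm_j,\ldots,1)$ with $m_j\in M_j$, the same verification of the three defining conditions, and the same explicit inverse construction $\tilde\alpha$ for surjectivity of $\phi$. The only cosmetic difference is that you obtain bijectivity of $\alpha_f$ via the two-sided inverse $\alpha_{f^{-1}}$ (using that $Aut_N^M(G)$ is closed under inverses), whereas the paper checks injectivity and surjectivity of $\alpha_f$ directly from those of $f$; both are valid and of comparable length.
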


%

\begin{proof}
\begin{enumerate}
\item First we show that $\alpha_f$ is an automorphism. Clearly, $\alpha_{f}$ is a homomorphism as $f$ and $\pi_j$ are homomorphisms.

To prove $\alpha_{f}$ is injective, consider $h_j \in H_j$ such that $\alpha_{f}(h_j)=1$. Note that $(1,\ldots,h_j,\ldots,1) \in G$.  Since $f\in Aut_{N}^{M}(G)$, therefore there exists $m_j \in M_j$ such that $f(1,\ldots,h_j,\ldots,1)=(1,\ldots,h_j m_j,\ldots,1)$. Then, $\alpha_{f}(h_j)= \pi_j(1,\ldots,h_j m_j,\ldots,1)$ = $ h_jm_j=1$. Thus $f(1,\ldots,h_j,\ldots,1)=(1,\ldots,1)$. Since $f$ is injective, therefore  $h_j=1$. Hence $\alpha_{f}$ is injective.

Now we show that $\alpha_{f}$ is surjective. Let $h_j\in H_j$.  Since $f$ is onto, therefore there exists  $(a_1,\ldots,a_k)\in G$ such that $f(a_1,\ldots,a_k)=(1,\ldots,h_j,\ldots,1)$. As $f$ fixes $N$ element wise, $f(1,\ldots,a_j,\ldots,1)=(a_1^{-1},\ldots,h_j,\ldots,a_k^{-1})$.
Therefore,  $\alpha_f(a_j)= \pi_j (a_1^{-1},\ldots,h_j,\ldots,a_k^{-1})=h_j$. Hence $\alpha_f$ is an automorphism.
Also $h_j^{-1}\alpha_f(h_j)\in M_j$ for all $h_j\in H_j$ and $\alpha_f(n_j)=n_j$ for all $n_j\in N_j$.
Thus $\alpha_f \in Aut_{N_j}^{M_j}(H_j)$.\\

\item  To prove that $\phi$ is a homomorphism, consider $f_1, f_2 \in Aut_{N}^{M}(G)$
and $h_j \in H_j$. Let  $x_1,x_2 \in M_j$ be such that
$f_2(1,\ldots,h_j,\ldots,1)=(1,\ldots,h_jx_2,\ldots,1)$ and
$f_1 o f_2(1,\ldots,h_j,\ldots,1)= f_1(1,\ldots,h_jx_2,\ldots,1)=(1,\ldots,h_j x_2x_1,\ldots,1)$.

Then
\begin{equation*}
\begin{split}
\alpha_{f_1}o\alpha_{f_2}(h_j) &= \alpha_{f_1}(\pi_j o f_2(1,\ldots,h_j,\ldots,1)) = \alpha_{f_1}(h_jx_2)\\
                                      & = \pi_j o f_1(1,\ldots,h_jx_2,\ldots,1) = h_j x_2x_1\\
 \Rightarrow \alpha_{f_1}o\alpha_{f_2}(h_j)                                    & = \pi_j o (f_1 of_2)(1,\ldots,h_j,\ldots,1)
                                     = \alpha_{f_1of_2}(h_j).
\end{split}
\end{equation*}

Therefore, $ \phi(f_1 o f_2) = \alpha_{f_1of_2}= \alpha_{f_1}o\alpha_{f_2} = \phi(f_1)o \phi(f_2)$.

To prove that $\phi$ is injective, consider $f \in Aut_{N}^{M}(G)$ such that  $\phi(f)= 1$, that is,
$\phi(f)(h_j)= \alpha_f(h_j)=h_j$  for all $h_j \in H_j$.  Since $f \in Aut_{N}^{M}(G)$, therefore there exists $m_j \in M_j$ such that $f(1,\ldots,h_j,\ldots,1)=(1,\ldots,h_jm_j,\ldots,1)$. Now  $ h_j = \alpha_f(h_j)= \pi_j o f(1, \ldots, h_j, \ldots, 1) = h_jm_j$ and therefore $  m_j= 1 $.
Thus $ f(1,\ldots,h_j,\ldots,1)=(1,\ldots,h_j,\ldots,1)$ for all $h_j\in H_j$.
As $f$ fixes $N$ element wise,  $ f(a_1,\ldots,a_k)= (a_1,\ldots,a_k)$  for all $(a_1,\ldots,a_k)\in G$. Hence $\phi$ is injective.

Now we show that $\phi$ is surjective.
Let $\psi \in Aut_{N_j}^{M_j}(H_j)$. Define $f_{\psi}: G \rightarrow G$ as $f_{\psi}(h_1,\ldots, h_j,\ldots,h_k)= (h_1,\ldots,\psi (h_j),\ldots,h_k)$.
It is easy to observe that $f_{\psi}\in Aut_{N}^{M}(G)$. Furthermore, $\phi(f_{\psi})= \psi$.
Thus $\phi$ is an isomorphism.
\end{enumerate}
\end{proof}

In \cite[Theorem A]{Rai}, Rai proved that if $G$ and $H$ are two finite isoclinic groups,  then there exists an isomorphism
$\Psi : Aut_{Z(G)}^{\gamma_{2}(G)}(G) \rightarrow Aut_{Z(H)}^{\gamma_{2}(H)}(H)$ such that $\Psi (Aut_{c}(G))=Aut_{c}(H)$.
Note that if $G = H\times A$, where $H$ is a purely non-abelian subgroup of $G$ and $A$ is a non-trivial abelian factor of $G$,
then $G$ and $H$ are isoclinic.
As a consequence of Theorem \ref{t2},  we obtain the following result which generalizes the above result of Rai \cite[Theorem A]{Rai}
to any group (need not be finite) in the case $ G = H \times A$.

\begin{theorem}\label{c1}
Let $G = H\times A$ be a group, where $H$ is a purely non-abelian subgroup of $G$ and $A$ is a non-trivial abelian factor of $G$. Then, for $n\geq 2$, there exists an isomorphism $\phi : Aut_{Z(G)}^{\gamma_{n}(G)}(G) \rightarrow Aut_{Z(H)}^{\gamma_{n}(H)}(H)$ such that $\phi(Aut_{c}^{n-1}(G))=Aut_{c}^{n-1}(H)$.
\end{theorem}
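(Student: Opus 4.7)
The plan is to derive this as a direct application of Theorem \ref{t2}, specialized to the case $k=2$, $H_1 = H$, $H_2 = A$, and $j = 1$. The first step is to choose the normal subgroups so that $M$ and $N$ in the hypothesis of Theorem \ref{t2} coincide with $\gamma_n(G)$ and $Z(G)$ respectively. Set $M_1 = \gamma_n(H)$ and $N_1 = Z(H)$. Using Lemma \ref{l5} together with the elementary fact that the lower central series of a direct product is the termwise product, and using that $A$ is abelian (so $\gamma_n(A) = 1$ for $n \geq 2$), we get $\gamma_n(G) = \gamma_n(H)\times\{1\} = M$. Likewise, since $A$ is abelian it lies entirely in $Z(G)$, so $Z(G) = Z(H)\times A = N$. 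Applying Theorem \ref{t2}(2) then produces an isomorphism
\[
\phi: Aut_{Z(G)}^{\gamma_n(G)}(G) \longrightarrow Aut_{Z(H)}^{\gamma_n(H)}(H),
\]
where $\phi(f)(h) = \pi_1\circ f(h,1)$ for $f\in Aut_{Z(G)}^{\gamma_n(G)}(G)$ and $h\in H$.

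The next step is to verify that $Aut_c^{n-1}(G) \subseteq Aut_{Z(G)}^{\gamma_n(G)}(G)$ and the analogous inclusion for $H$. If $\beta \in Aut_c^{n-1}(G)$, then for each $g$ we have $\beta(g) = x^{-1}gx$ with $x\in \gamma_{n-1}(G)$, so $g^{-1}\beta(g) = [g,x] \in \gamma_n(G)$, and $\beta$ fixes $Z(G)$ elementwise since conjugation acts trivially on the center. Thus $\phi$ restricts to a well-defined injective homomorphism on $Aut_c^{n-1}(G)$, and it only remains to check that this restriction has image precisely $Aut_c^{n-1}(H)$.

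For the forward inclusion, take $f \in Aut_c^{n-1}(G)$ and $h \in H$. Applied to $(h,1)$, there exists $(x,a) \in \gamma_{n-1}(G) = \gamma_{n-1}(H) \times \gamma_{n-1}(A)$ such that $f(h,1) = (x,a)^{-1}(h,1)(x,a) = (x^{-1}hx,\,1)$. Since $\gamma_{n-1}(A)$ is either all of $A$ (when $n = 2$) or trivial (when $n \geq 3$), in both cases $x \in \gamma_{n-1}(H)$, giving $\phi(f)(h) = x^{-1}hx$ with $x \in \gamma_{n-1}(H)$; hence $\phi(f) \in Aut_c^{n-1}(H)$. For the reverse inclusion, take $\psi \in Aut_c^{n-1}(H)$ and form $f_\psi(h,a) = (\psi(h),a)$ as in the surjectivity part of Theorem \ref{t2}; then for any $h\in H$ with $\psi(h) = x^{-1}hx$, $x\in\gamma_{n-1}(H)$, the element $(x,1)$ lies in $\gamma_{n-1}(H)\times\{1\} \subseteq \gamma_{n-1}(G)$ and conjugates $(h,a)$ to $(\psi(h),a) = f_\psi(h,a)$. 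Hence $f_\psi \in Aut_c^{n-1}(G)$ and $\phi(f_\psi) = \psi$.

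The only mildly delicate point, and the one that needs explicit attention, is the bookkeeping between $\gamma_{n-1}(G)$ and $\gamma_{n-1}(H)$ in the two edge cases $n = 2$ and $n \geq 3$ (since $\gamma_{n-1}(A)$ is non-trivial exactly when $n = 2$). I do not expect any deep obstacle beyond carefully unpacking the definitions and using that a conjugator's second coordinate has no effect on the first-coordinate conjugation.
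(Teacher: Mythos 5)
Your proposal is correct and follows essentially the same route as the paper: identify $\gamma_n(G)=\gamma_n(H)\times\{1\}$ and $Z(G)=Z(H)\times A$, invoke Theorem \ref{t2}(2) for the isomorphism $\phi$, and then check both inclusions for the class-preserving subgroups using the conjugator's first coordinate and the map $f_\psi(h,a)=(\psi(h),a)$. Your explicit case split on $\gamma_{n-1}(A)$ for $n=2$ versus $n\geq 3$ is a small point of extra care that the paper glosses over, but the substance is identical.
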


\begin{proof}
Note that $Z(G)=Z(H)\times A$ and $\gamma_n(G)=\gamma_n(H) \times \{1\}$. Then, by Theorem \ref{t2}(2), we have an isomorphism $\phi : Aut_{Z(G)}^{\gamma_{n}(G)}(G) \rightarrow Aut_{Z(H)}^{\gamma_{n}(H)}(H)$ given by $\phi(f) = \alpha_f$, where $\alpha_f$ is the map defined in Theorem \ref{t2}(1). Next, we show that $\phi(Aut_{c}^{n-1}(G))=Aut_{c}^{n-1}(H)$. Let $\pi_H$ be the projection of $G$ on $H$.
Consider $f\in Aut_{c}^{n-1}(G) \subseteq Aut_{Z(G)}^{\gamma_{n}(G)}(G)$, then $\phi(f)  \in Aut_{Z(H)}^{\gamma_{n}(H)}(H)$.
Also for any $h\in H$, we have $\phi(f)(h)=\alpha_f(h)=\pi_H(f(h,1))= k^{-1}hk$; where $f(h,1)=(k^{-1}hk,1)$ for some $k\in \gamma_{n-1}(H)$. Thus $\phi(f) \in Aut_{c}^{n-1}(H)$. Hence $\phi(Aut_{c}^{n-1}(G)) \subseteq Aut_{c}^{n-1}(H)$.

 Now,  consider $\psi \in Aut_{c}^{n-1}(H)$ and define $f_{\psi}: G \rightarrow G$ as $f_{\psi}(h,a)=(\psi(h),a)$ for all $(h,a)\in G$. It is easy to see that $f_{\psi}\in Aut_c^{n-1}(G)$. Also for $h\in H,~\phi(f_{\psi})(h)= \alpha_{f_{\psi}}(h)= \pi_H(f_{\psi}(h,1))=\pi_H(\psi(h),a)=\psi(h)$. Hence $\phi(f_{\psi})=\psi$. Thus $\psi \in \phi(Aut_{c}^{n-1}(G))$. Hence $Aut_{c}^{n-1}(H) \subseteq \phi(Aut_{c}^{n-1}(G))$.
\end{proof}

\begin{lemma}\label{l7}
Let $G$ be a group. Let $M$ and $N$ be two normal subgroups of $G$ such that $M$ is abelian and $[N,M] = 1$. Then the following statements are true:
\begin{enumerate}[$(1)$]
\item{} For each $ f \in Aut_N^M(G)$ the map $\alpha_f : G/N \rightarrow M $ given by $\alpha_f(gN) = g^{-1}f(g)$ is well
defined.
\item{} If $f_1, f_2 \in Aut_N^M(G)$ and $f_1 \neq f_2$ then $\alpha_{f_1} \neq \alpha_{f_2}$.
\end{enumerate}
\end{lemma}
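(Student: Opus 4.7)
The plan for (1) is to verify the two well-definedness conditions: first that $\alpha_f(gN)$ actually lies in $M$, and second that the value depends only on the coset $gN$ and not on the representative $g$. Membership in $M$ is immediate from the hypothesis $f \in Aut^M(G)$, which means $f$ acts trivially on $G/M$, equivalently $g^{-1}f(g) \in M$ for every $g \in G$. For representative-independence, I would pick two representatives $g$ and $g' = gn$ with $n \in N$, expand
\[
\alpha_f(g'N) = (gn)^{-1}f(gn) = n^{-1}g^{-1}f(g)f(n),
\]
and then use that $f \in Aut_N(G)$ fixes $N$ element-wise to replace $f(n)$ by $n$, giving $n^{-1}\bigl(g^{-1}f(g)\bigr)n$. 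Since $g^{-1}f(g) \in M$ and $n \in N$, the hypothesis $[N,M] = 1$ forces $n$ to commute with $g^{-1}f(g)$, which reduces the expression to $g^{-1}f(g) = \alpha_f(gN)$, as desired.

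For (2) the natural route is to prove the contrapositive, which is simply the injectivity of the assignment $f \mapsto \alpha_f$. Assume $\alpha_{f_1} = \alpha_{f_2}$; then for every $g \in G$ we have $g^{-1}f_1(g) = g^{-1}f_2(g)$ in $M$, and cancelling $g^{-1}$ on the left yields $f_1(g) = f_2(g)$ for every $g$, i.e., $f_1 = f_2$. Contrapositively, $f_1 \neq f_2$ implies $\alpha_{f_1} \neq \alpha_{f_2}$.

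I do not anticipate any real obstacle here; the statement is essentially a bookkeeping lemma. The only subtle point is recognizing where each hypothesis enters: $f \in Aut^M(G)$ gives the codomain, $f \in Aut_N(G)$ is needed to push $f(n)$ back to $n$ when changing representatives, and the commutator condition $[N,M] = 1$ is what makes that conjugation disappear. No nontrivial computation is required beyond these substitutions.
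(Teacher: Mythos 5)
Your proposal is correct and follows essentially the same argument as the paper: the same expansion $(gn)^{-1}f(gn) = n^{-1}g^{-1}f(g)\,n$ using that $f$ fixes $N$ element-wise, followed by the commutation from $[N,M]=1$; part (2) is the paper's direct argument stated contrapositively, and your extra remark that the values land in $M$ is a harmless (indeed welcome) addition. No gaps.
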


\begin{proof}
  \begin{enumerate}
  \item Let $g_1, g_2 \in G$ with $g_1 N = g_2 N$. Then $g_1 = g_2 n$ for some $n\in N$. Observe that
    $$g_1^{-1} f(g_1)= (g_2 n)^{-1} f(g_2 n) = n^{-1} g_2^{-1} f(g_2) f(n) =  n^{-1} g_2^{-1} f(g_2) n.$$
      Since $g_2^{-1} f(g_2) \in M$ and $[N, M] = 1$, therefore $g_2^{-1} f(g_2)$ commutes with $n$. Thus $g_1^{-1} f(g_1) = g_2^{-1} f(g_2)$.\\

   \item  If $f_1, f_2 \in Aut_N^M(G)$ and $f_1 \neq f_2$. Then $f_1(g) \neq f_2(g)$ for some $g\in G$.
  This implies that $g^{-1} f_1(g) \neq g^{-1} f_2(g)$. Therefore  $\alpha_{f_1} \neq \alpha_{f_2}$.
  \end{enumerate}
\end{proof}

A subgroup $H$ of a group $G$ is called central if $H \subseteq Z(G)$.

\begin{theorem}\label{t6}
Let $G$ be a group and let $M$ and $N$ be two normal subgroups of $G$. Suppose, $M$ is a central subgroup of $G$. Then the following statements are true:
\begin{enumerate}[$(1)$]
\item For each $ f \in Aut_N^M(G)$ the map $\alpha_f : G/N \rightarrow M $ given by $\alpha_f(gN) = g^{-1}f(g)$ is well
defined and it is a homomorphism.

\item If $G$ is finite and $M \subseteq N$, then the map $\phi : Aut_N^M(G) \rightarrow Hom(G/N , M)$ defined by
$\phi(f) =  \alpha_f $ is an isomorphism.
\end{enumerate}
\end{theorem}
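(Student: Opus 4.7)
The plan for part (1) is to get well-definedness for free from Lemma~\ref{l7}(1): since $M\subseteq Z(G)$, $M$ is abelian and $[N,M]\subseteq[G,Z(G)]=1$, so the hypotheses of Lemma~\ref{l7} are met. For the homomorphism property I would simply expand
\[
\alpha_f(g_1 g_2 N) = (g_1 g_2)^{-1} f(g_1)f(g_2) = g_2^{-1}\bigl(g_1^{-1} f(g_1)\bigr)f(g_2),
\]
and observe that $g_1^{-1}f(g_1)\in M$ is central, so it slides past $g_2^{-1}$ to produce $\alpha_f(g_1N)\alpha_f(g_2N)$.

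For part (2) I would first verify that $\phi$ is a group homomorphism into $Hom(G/N,M)$ (whose operation is pointwise, since $M$ is abelian). Writing $\alpha_{f_1\circ f_2}(gN)=g^{-1}f_1(f_2(g))$ and inserting $f_2(g)f_2(g)^{-1}$ splits this into $\alpha_{f_2}(gN)\cdot\alpha_{f_1}(f_2(g)N)$. The hypothesis $M\subseteq N$ enters here: because $f_2(g)=g\cdot\alpha_{f_2}(gN)$ with $\alpha_{f_2}(gN)\in M\subseteq N$, we have $f_2(g)N=gN$, so the expression collapses to $\alpha_{f_2}(gN)\alpha_{f_1}(gN)$, which by abelianness of $M$ is the pointwise product of $\phi(f_1)$ and $\phi(f_2)$. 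Injectivity of $\phi$ is exactly Lemma~\ref{l7}(2).

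The main obstacle, and the only place where finiteness of $G$ is invoked, is surjectivity. Given $\psi\in Hom(G/N,M)$, my candidate preimage is $f_\psi(g):=g\,\psi(gN)$. The homomorphism property of $f_\psi$ follows from centrality of $\psi(g_1 N)\in M\subseteq Z(G)$, which lets me move it past $g_2$ in the expansion of $f_\psi(g_1)f_\psi(g_2)$. By construction $g^{-1}f_\psi(g)=\psi(gN)\in M$, and $f_\psi$ fixes $N$ pointwise because $\psi$ is trivial on $N/N$, so once $f_\psi$ is known to be bijective it lies in $Aut_N^M(G)$. Injectivity is immediate: $f_\psi(g)=1$ forces $g=\psi(gN)^{-1}\in M\subseteq N$, whence $gN=N$, so $\psi(gN)=1$ and $g=1$. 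Finiteness of $G$ then upgrades this injective endomorphism to an automorphism, and a direct check gives $\alpha_{f_\psi}(gN)=g^{-1}f_\psi(g)=\psi(gN)$, so $\phi(f_\psi)=\psi$ and $\phi$ is the desired isomorphism.
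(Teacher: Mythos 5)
Your proposal is correct and follows essentially the same route as the paper's proof: well-definedness via Lemma \ref{l7}(1), the same centrality computation for the homomorphism property, injectivity via Lemma \ref{l7}(2), and surjectivity via the candidate $f_\psi(g)=g\psi(gN)$ with finiteness used only to upgrade injectivity of $f_\psi$ to bijectivity. No gaps; your explicit remark that $f_\psi$ fixes $N$ pointwise because $\psi$ kills $N/N$ is a detail the paper leaves to the reader.
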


\begin{proof}
  \begin{enumerate}
\item{} By Lemma \ref{l7}(1) the map $\alpha_f$ is well defined. Let $g_1, g_2 \in G$. Then
      $$\alpha_f(g_1N g_2N) = \alpha_f(g_1g_2N) =  g_2^{-1} g_1^{-1} f(g_1) f(g_2).$$
    Since $g_1^{-1} f(g_1) \in M \subseteq Z(G)$, therefore $\alpha_f(g_1N g_2N) = g_1^{-1} f(g_1) g_2^{-1} f(g_2) = \alpha_f(g_1N) \alpha_f(g_2N) $.
    Thus $\alpha_f$ is a homomorphism.\\

\item{} Let $f_1, f_2 \in Aut_N^M(G)$. We claim that $\alpha_{f_1} \alpha_{f_2} = \alpha_{f_1 o f_2}$. Let $g\in G$. Since $g^{-1} f_2(g) \in M \subseteq N$,
    therefore $gN = f_2(g)N$. Then
    \begin{equation*}
    \begin{split}
    (\alpha_{f_1} \alpha_{f_2}) (gN) &= \alpha_{f_1}(gN) \alpha_{f_2}(gN)\\
                                     &= \alpha_{f_1}(f_2(g) N) \alpha_{f_2}(gN)\\
                                     &= f_2(g^{-1})  f_1(f_2(g)) g^{-1} f_2(g).
    \end{split}
    \end{equation*}

    Since $g^{-1} f_2(g) \in M \subseteq Z(G)$, therefore $ g^{-1} f_2(g) f_2(g^{-1})  f_1(f_2(g))
     = g^{-1} f_1(f_2(g))$ and hence $(\alpha_{f_1} \alpha_{f_2}) (gN)  = \alpha_{f_1 o f_2}(gN)$.
    This proves that the map $\phi$ is a homomorphism.

    By Lemma \ref{l7}(2) $\phi$ is injective. It remains to show that $\phi$ is surjective. Let $\psi \in Hom(G/N, M)$ be arbitrary. We define the map $f_{\psi} : G \rightarrow G$ as $f_{\psi}(g) = g \psi(gN)$ for all $g\in G$.

    First we claim that $f_{\psi}$ is a homomorphism. Let $g_1, g_2 \in G$. Then
      $$f_{\psi}(g_1 g_2) = g_1 g_2 \psi(g_1 g_2 N) =  g_1 g_2 \psi(g_1 N)\psi(g_2 N).$$
    Since $\psi(g_1 N) \in M \subseteq Z(G)$, therefore $ g_1 g_2 \psi(g_1 N)\psi(g_2 N) = g_1 \psi(g_1 N) g_2 \psi(g_2 N)$ and hence $f_{\psi}(g_1 g_2) = f_{\psi}(g_1) f_{\psi}(g_2)$.

    Next we claim that $f_{\psi}$ is injective. Let $g\in G$ with $f_{\psi}(g) = 1$. Then $g \psi(gN) = 1$ and so $\psi(gN) = g^{-1}$.
    Since $\psi(gN) \in M \subseteq N$, it follows that $g^{-1} \in N$ and so $\psi(gN) = 1$. Thus $g =1$.\\
    Since the map $f_{\psi}$ is injective and $G$ is finite, therefore $f_{\psi}$ is surjective. Thus $f_{\psi} \in Aut(G)$.
    Now it is easy to observe that $f_{\psi} \in Aut_N^M(G)$ and $\phi(f_{\psi}) = \psi$.
    Hence the map $\phi$ is surjective.
  \end{enumerate}
\end{proof}

Let $G$ be a nilpotent group of class at most $n$ and $H$ be a central subgroup of $G$. We denote the group  $\{f\in Hom(G/H,\gamma_n(G))~|~~ f(gH)\in [g,\gamma_{n-1}(G)]$  $ \forall g\in G\}$ by $Hom_{c}(G/H, \gamma_n(G))$.

In \cite[Proposition 3.1]{Manoj}, Yadav proved that  for a finite nilpotent group $G$ of class 2,  $Aut_{c}(G)$ is isomorphic to $Hom_{c}(G/Z(G), \gamma_2(G))$. We generalize this result to a finite nilpotent group of class $n$. Also we observe that the result is true for any central subgroup $H$ of $G$ which contains $\gamma_n(G)$.

\begin{theorem}\label{t3}
Let $G$ be a finite nilpotent group of class at most $n$ and let $H$ be a central subgroup of $G$ such that $\gamma_n(G)\leq H$. Then, for  $n\geq 2$, $Aut_{c}^{n-1}(G)$ is isomorphic to $Hom_{c}(G/H, \gamma_n(G))$.
\end{theorem}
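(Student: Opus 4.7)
The plan is to reduce the statement to Theorem~\ref{t6}(2) by restricting the isomorphism it provides. Since $G$ is nilpotent of class at most $n$, one has $\gamma_n(G)\le Z(G)$, so $\gamma_n(G)$ is a central normal subgroup; combined with the hypothesis $\gamma_n(G)\le H\le Z(G)$, Theorem~\ref{t6}(2) applied with $M=\gamma_n(G)$ and $N=H$ yields an isomorphism
\[
\phi: Aut_H^{\gamma_n(G)}(G)\longrightarrow Hom(G/H,\gamma_n(G)),\qquad \phi(f)(gH)=g^{-1}f(g),
\]
whose inverse sends $\psi$ to the automorphism $f_\psi(g)=g\,\psi(gH)$. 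My aim is then to show that this $\phi$ restricts to an isomorphism $Aut_c^{n-1}(G)\to Hom_c(G/H,\gamma_n(G))$.

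First I would verify that $Aut_c^{n-1}(G)$ is contained in $Aut_H^{\gamma_n(G)}(G)$: for $\beta\in Aut_c^{n-1}(G)$ and $g\in G$, writing $\beta(g)=x^{-1}gx$ with $x\in\gamma_{n-1}(G)$ gives $g^{-1}\beta(g)=[g,x]\in\gamma_n(G)$, so $\beta$ acts trivially modulo $\gamma_n(G)$; moreover $\beta$ fixes $H\le Z(G)$ element-wise since $\beta(h)=x^{-1}hx=h$ for central $h$. The same computation shows $\phi(\beta)(gH)=[g,x]\in[g,\gamma_{n-1}(G)]$, so $\phi$ maps $Aut_c^{n-1}(G)$ into $Hom_c(G/H,\gamma_n(G))$.

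The main obstacle, and the step that genuinely uses the nilpotency class hypothesis, is showing the converse: for every $\psi\in Hom_c(G/H,\gamma_n(G))$, the preimage $f_\psi$ lies in $Aut_c^{n-1}(G)$. A priori, the condition $\psi(gH)\in[g,\gamma_{n-1}(G)]$ only says that $\psi(gH)$ is some product of commutators of the form $[g,y_i]$ with $y_i\in\gamma_{n-1}(G)$. The key observation is that, because $\gamma_n(G)\le Z(G)$, the map $y\mapsto[g,y]$ is actually a group homomorphism from $\gamma_{n-1}(G)$ to $\gamma_n(G)$: the standard identity $[g,y_1y_2]=[g,y_2][g,y_1]^{y_2}$ collapses to $[g,y_1y_2]=[g,y_1][g,y_2]$ once one uses that $[g,y_1]\in\gamma_n(G)$ is central and hence fixed under conjugation by $y_2$. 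Consequently $[g,\gamma_{n-1}(G)]$ coincides as a set with $\{[g,y]:y\in\gamma_{n-1}(G)\}$, and one may write $\psi(gH)=[g,x]$ for a single $x\in\gamma_{n-1}(G)$. Then
\[
f_\psi(g)=g\,\psi(gH)=g\,[g,x]=x^{-1}gx,
\]
which places $f_\psi$ in $Aut_c^{n-1}(G)$ and completes the proof.
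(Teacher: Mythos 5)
Your proof is correct and follows essentially the same route as the paper: restrict the isomorphism of Theorem~\ref{t6}(2) with $M=\gamma_n(G)$, $N=H$ to $Aut_c^{n-1}(G)$ and identify the image with $Hom_c(G/H,\gamma_n(G))$. In fact you supply the one detail the paper waves off as ``easy to see'' --- that $\gamma_n(G)\le Z(G)$ makes $y\mapsto[g,y]$ a homomorphism, so every element of $[g,\gamma_{n-1}(G)]$ is a single commutator $[g,x]$ and hence $f_\psi$ is genuinely $(n-1)^{th}$ class preserving --- which is exactly the point that needs the nilpotency hypothesis.
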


\begin{proof}
Since $\gamma_n(G) \leq  H$, therefore by Theorem \ref{t6}(2) if we take  $M = \gamma_n(G)$ and $N = H$, the map $\phi : Aut_H^{\gamma_n(G)}(G) \rightarrow Hom(G/H , \gamma_n(G))$ defined by $\phi(f) =  \alpha_f $ is an isomorphism.

Note that $Aut_{c}^{n-1}(G) \subseteq Aut_H^{\gamma_n(G)}(G)$ and hence the restriction map $\phi : Aut_{c}^{n-1}(G) \rightarrow Hom(G/H , \gamma_n(G))$ is a injective homomorphism. It remains to show that $\phi(Aut_{c}^{n-1}(G)) = Hom_{c}(G/H, \gamma_n(G))$. Let $f \in Aut_{c}^{n-1}(G)$ be arbitrary.  Then for a given $g\in G$, there exists $x\in \gamma_{n-1}(G)$ depending on $g$, such that $f(g)=x^{-1}gx$. Thus
$$\phi(f)(gH)=\alpha_f(gH)=g^{-1}f(g)= g^{-1}x^{-1}gx = [g,x] \in [g,\gamma_{n-1}(G)].$$
Therefore $\phi(f) \in Hom_{c}(G/H, \gamma_n(G))$ and hence $\phi(Aut_{c}^{n-1}(G)) \subseteq Hom_{c}(G/H, \gamma_n(G))$.\\

Let $\psi \in Hom_{c}(G/H, \gamma_n(G))$ be arbitrary. We define the map $f_{\psi} : G \rightarrow G$ by $f_{\psi}(g) = g\psi(gH)$ for all $g\in G$. Then by using the argument similar to Theorem \ref{t6}(2), $f_{\psi} \in Aut_H^{\gamma_n(G)}(G)$. Now it is easy to see that $f_{\psi} \in Aut_{c}^{n-1}(G)$ and $\phi(f_{\psi})
= \alpha_{f_{\psi}} = \psi$. Thus $Hom_{c}(G/H, \gamma_n(G)) \subseteq \phi(Aut_{c}^{n-1}(G))$.
Hence the map $\phi$ is an isomorphism from $Aut_{c}^{n-1}(G)$ to $Hom_{c}(G/H, \gamma_n(G))$.
\end{proof}

\begin{corollary}\label{c2}
Let $G$ be a finite nilpotent group of class $n$. Then $Aut_{c}^{n-1}(G)$ is isomorphic to $Hom_{c}(G/Z(G), \gamma_n(G))$.
\end{corollary}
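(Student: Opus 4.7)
The plan is to obtain this corollary as a direct specialization of Theorem \ref{t3} with $H = Z(G)$. So the proof essentially reduces to verifying the two hypotheses of Theorem \ref{t3} in this setting.

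First I would note that $Z(G)$ is by definition a central subgroup of $G$, so one hypothesis is immediate. For the other, I need to check that $\gamma_n(G) \leq Z(G)$. Since $G$ is nilpotent of class $n$, we have $\gamma_{n+1}(G) = 1$, i.e., $[\gamma_n(G), G] = 1$, which is exactly the statement that $\gamma_n(G) \leq Z(G)$. Both hypotheses of Theorem \ref{t3} are therefore satisfied with $H = Z(G)$.

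With these verifications in place, Theorem \ref{t3} applied with $H = Z(G)$ yields an isomorphism $\phi : Aut_c^{n-1}(G) \to Hom_c(G/Z(G), \gamma_n(G))$, which is the desired conclusion. There is no real obstacle here, since the content of the corollary is entirely absorbed into Theorem \ref{t3}; the only thing to record explicitly is the standard fact that nilpotency class $n$ forces $\gamma_n(G) \leq Z(G)$.
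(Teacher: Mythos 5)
Your proposal is correct and matches the paper's proof exactly: both simply apply Theorem \ref{t3} with $H = Z(G)$. Your explicit verification that nilpotency class $n$ gives $\gamma_{n+1}(G)=1$, hence $\gamma_n(G)\leq Z(G)$, is a welcome detail the paper leaves implicit.
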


\begin{proof}
Take $ H = Z(G)$. Then the  result follows from  Theorem \ref{t3}.
\end{proof}

Now  the result of Yadav \cite[Proposition 3.1]{Manoj} follows from  Corollary \ref{c2} by putting $n=2$.


\section{{p-groups with $Autcent(G)$  equal to either $Aut_{c}^{n-1}(G)$ or $Aut^{\gamma_n(G)}_{Z(G)}(G)$}}\label{s4}

Here we discuss the equality of central automorphisms  with $(n-1)^{th}$ class preserving automorphisms and with  $Aut^{\gamma_n(G)}_{Z(G)}(G)$.

\begin{theorem}\label{t4}
Let $G$ be a finite non-abelian $p$-group and  $n\geq 2$. Let $Aut_{c}^{n-1}(G)=Autcent(G)$. Then

\begin{enumerate}[$(1)$]
\item $\gamma_n(G) \leq Z(G)$ and $d(\gamma_n(G)) = d(Z(G))$.

\item If $exp(G/\gamma_2(G))>var(\gamma_{n}(G),Z(G))$, then $\gamma_{n}(G)=Z(G)$ and $Aut_{c}^{n-1}(G)$ is isomorphic to  $Hom(G/Z(G), \gamma_n(G))$.
\end{enumerate}
Conversely, if $\gamma_{n}(G)=Z(G)$ and $Aut_{c}^{n-1}(G)$ is isomorphic to  $Hom(G/Z(G), \gamma_n(G))$, then $Aut_{c}^{n-1}(G)=Autcent(G)$ and $exp(G/\gamma_2(G))>var(\gamma_{n}(G),Z(G))$.

\end{theorem}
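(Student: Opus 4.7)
The plan is to get part (1) by stringing the Adney--Yen bijection (Theorem \ref{t1}) together with Lemma \ref{l1} into a pinched sequence of Hom--cardinalities, and then to read off part (2) and the converse from that same sequence.

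First I would establish $\gamma_n(G) \leq Z(G)$: for any $x \in \gamma_{n-1}(G)$ the inner automorphism $\iota_x \colon g \mapsto x^{-1}gx$ lies in $Aut_c^{n-1}(G) = Autcent(G)$, so $[g,x] = g^{-1}\iota_x(g) \in Z(G)$ for every $g \in G$, giving $\gamma_n(G) = [G,\gamma_{n-1}(G)] \leq Z(G)$. With this containment in place, for each $\alpha \in Aut_c^{n-1}(G)$ the assignment $f_\alpha(g) = g^{-1}\alpha(g)$ takes values in the central subgroup $\gamma_n(G)$, descends to a homomorphism $G/\gamma_2(G) \to \gamma_n(G)$, and the map $\alpha \mapsto f_\alpha$ is injective. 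After noting that the hypothesis forces $G$ to be purely non-abelian (otherwise a non-trivial automorphism of an abelian direct factor, or a central automorphism built from a homomorphism of the factor into $Z(H)\setminus\gamma_n(H)$, would be central but not $(n-1)$th class preserving), Theorem \ref{t1} yields
$$|Hom(G/\gamma_2(G), Z(G))| = |Autcent(G)| = |Aut_c^{n-1}(G)| \leq |Hom(G/\gamma_2(G), \gamma_n(G))| \leq |Hom(G/\gamma_2(G), Z(G))|,$$
so all four terms coincide. Applying Lemma \ref{l1} to $\gamma_n(G) \leq Z(G)$ then forces either $\gamma_n(G) = Z(G)$ outright, or $d(\gamma_n(G)) = d(Z(G))$ together with $exp(G/\gamma_2(G)) \leq var(\gamma_n(G),Z(G))$; in either case $d(\gamma_n(G)) = d(Z(G))$, settling (1).

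For (2), the extra hypothesis $exp(G/\gamma_2(G)) > var(\gamma_n(G),Z(G))$ rules out the second alternative of Lemma \ref{l1}, so $\gamma_n(G) = Z(G)$. To obtain the isomorphism, I would invoke Theorem \ref{t3} with $H = Z(G) = \gamma_n(G)$ to get $Aut_c^{n-1}(G) \cong Hom_c(G/Z(G), \gamma_n(G))$; since $Z(G) = \gamma_n(G) \leq \gamma_2(G)$, the abelianization of $G/Z(G)$ is $G/\gamma_2(G)$, which identifies $Hom(G/Z(G), \gamma_n(G))$ with $Hom(G/\gamma_2(G), \gamma_n(G))$. By the displayed chain this latter Hom--group has cardinality $|Aut_c^{n-1}(G)|$, so the inclusion $Hom_c \subseteq Hom$ is forced to be an equality, delivering $Aut_c^{n-1}(G) \cong Hom(G/Z(G), \gamma_n(G))$.

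For the converse, $\gamma_n(G) = Z(G)$ gives $var(\gamma_n(G),Z(G)) = 1 < exp(G/\gamma_2(G))$, the latter being $> 1$ since $G$ is a non-abelian finite $p$-group. The inclusion $Aut_c^{n-1}(G) \subseteq Autcent(G)$ is immediate from $[G,\gamma_{n-1}(G)] = \gamma_n(G) = Z(G)$; for the reverse inclusion I compare orders, writing $|Aut_c^{n-1}(G)| = |Hom(G/Z(G), \gamma_n(G))| = |Hom(G/\gamma_2(G), Z(G))| = |Autcent(G)|$ via the hypothesized isomorphism, the abelianization identity, and Theorem \ref{t1} (valid because $\gamma_n(G) = Z(G)$ again forces $G$ purely non-abelian: a non-trivial abelian direct factor $A$ would make $\gamma_n(G) = \gamma_n(H) \times \{1\}$ incompatible with $Z(G) = Z(H) \times A$). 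The step I expect to require the most care is precisely this verification that $G$ is purely non-abelian at each invocation of Adney--Yen; the argument reduces in each case to a short direct-product bookkeeping, but it is the one place where the hypotheses have to be probed rather than applied directly.
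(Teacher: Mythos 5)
Your proof is correct, and its skeleton --- inner automorphisms to get $\gamma_n(G)\leq Z(G)$, the Adney--Yen count plus the injection $\alpha\mapsto(g\mapsto g^{-1}\alpha(g))$ to pinch $Hom(G/\gamma_2(G),Z(G))$ against $Hom(G/\gamma_2(G),\gamma_n(G))$, and Lemma \ref{l1} to extract $d(\gamma_n(G))=d(Z(G))$ and then $\gamma_n(G)=Z(G)$ --- is exactly the paper's. You deviate in two local steps, both legitimately. First, to show $G$ is purely non-abelian in part (1) you construct an explicit central automorphism $(h,a)\mapsto(h\psi(a),a)$ that moves a central element and hence cannot be $(n-1)$th class preserving; the paper instead runs a counting argument through Lemma \ref{l6} (Bidwell--Curran--McCaughan) and Theorem \ref{c1} to get $|Autcent(G)|>|Autcent(H)|\geq|Aut_c^{n-1}(H)|=|Aut_c^{n-1}(G)|$. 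Your construction is more elementary and self-contained (note that the restriction of $\psi$ to land in $Z(H)\setminus\gamma_n(H)$ is unnecessary --- any non-trivial $\psi\colon A\to Z(H)$ works, since $(n-1)$th class preserving automorphisms fix $Z(G)$ pointwise), whereas the paper's version reuses machinery it has already built. Second, for the isomorphism $Aut_c^{n-1}(G)\cong Hom(G/Z(G),\gamma_n(G))$ in part (2) you go through Theorem \ref{t3} to get $Hom_c(G/Z(G),\gamma_n(G))$ and then force $Hom_c=Hom$ by the cardinality squeeze; the paper gets it in one step by applying Theorem \ref{t6}(2) to $Autcent(G)=Aut^{Z(G)}_{\gamma_n(G)}(G)$. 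Your route is slightly longer but has the side benefit of showing that every homomorphism $G/Z(G)\to\gamma_n(G)$ is realized by commutators under these hypotheses. The converse direction is handled identically in both.
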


%

\begin{proof}
\begin{enumerate}
\item Let $Aut_{c}^{n-1}(G)=Autcent(G)$. We first prove that $\gamma_{n}(G)\leq Z(G)$.
Let $g\in G$ and $x\in \gamma_{n-1}(G)$. Then $[g,x] = g^{-1}\sigma_{x^{-1}}(g)$, where $\sigma_{x^{-1}}$ is defined as $\sigma_{x^{-1}}(a)= x^{-1}ax$ for all $a\in G$. As $\sigma_{x^{-1}} \in Aut_{c}^{n-1}(G)= Autcent(G)$, we get
$  [g,x] = g^{-1}\sigma_{x^{-1}}(g) \in Z(G)$, and therefore $\gamma_n(G) \leq Z(G)$.

 To prove that $d(\gamma_n(G)) = d(Z(G))$,  first we show that $G$ is purely non-abelian. Suppose, if possible, $G$ has a non-trivial abelian direct factor, say $G=H\times A$ where $H$ is a purely non-abelian subgroup and $A$ is a non-trivial abelian direct factor of $G$. Note that $\gamma_n(H) \leq Z(H)$. Then using Lemma \ref{l6} and Theorem \ref{c1}, $|Autcent(G)|>|Autcent(H)|\geq |Aut_{c}^{n-1}(H)|=|Aut_{c}^{n-1}(G)|=|Autcent(G)|$, which is absurd. Thus $G$ is purely non-abelian.
Hence,  by Theorem \ref{t1},  $$|Autcent(G)| = |Hom(G/\gamma_{2}(G), Z(G))|.$$
Now for $f \in Aut_{c}^{n-1}(G)$,  by Theorem \ref{t6}(1), if we take $M = \gamma_n(G)$ and $N=\{1\}$, the map $\alpha_f \in Hom(G, \gamma_{n}(G))$. As $ \gamma_2(G) \subseteq ker(\alpha_f)$; $~~\alpha_f$ can be extended to a homomorphism from $G/\gamma_2(G)$ to $\gamma_{n}(G)$. Therefore, $|Aut_{c}^{n-1}(G)|\leq |Hom(G/\gamma_2(G),\gamma_{n}(G))|$. Thus by the given condition, $$Hom(G/\gamma_2(G),Z(G)) = Hom(G/\gamma_2(G),\gamma_{n}(G)).$$
Then by using Lemma \ref{l1}, $d(\gamma_n(G)) = d(Z(G))$.\\

\item Since $exp(G/\gamma_2(G))>var(\gamma_n(G),Z(G))$, therefore by using Lemma \ref{l1}, we get $\gamma_{n}(G)=Z(G)$.

Note that $Autcent(G)= Aut_{\gamma_n(G)}^{Z(G)}(G)$. Then by Theorem \ref{t6}(2), if we take $M = Z(G)$ and $N = \gamma_n(G)$, $Autcent(G)$ is isomorphic to $Hom(G/\gamma_n(G), Z(G))$. Since $\gamma_n(G)=Z(G)$, therefore $Autcent(G)$ is isomorphic to $Hom(G/Z(G), \gamma_{n}(G))$. As $Aut_{c}^{n-1}(G) = Autcent(G)$, it follows that $Aut_{c}^{n-1}(G)$ is isomorphic to $Hom(G/Z(G), \gamma_{n}(G))$.
\end{enumerate}
%
~\\
Conversely, assume that $\gamma_{n}(G)=Z(G)$ and $Aut_{c}^{n-1}(G)$ is isomorphic to $Hom(G/Z(G), \gamma_{n}(G))$. Now as $\gamma_n(G) = Z(G)$, therefore $G$ is purely non-abelian and \\
$Aut_{c}^{n-1}(G)\subseteq Autcent(G)$. Hence, by using Theorem \ref{t1}, we get,
$$
\begin{array}{lcl}
|Autcent(G)|&=&|Hom(G/\gamma_{n}(G), Z(G))| \\[2mm]
&=& |Hom(G/Z(G),\gamma_{n}(G))| \\[2mm]
&=&|Aut_{c}^{n-1}(G)|.
\end{array}
$$

Therefore, $Autcent(G)=Aut_{c}^{n-1}(G)$. Clearly,   $exp(G/\gamma_2(G)) > var(\gamma_{n}(G),Z(G))$.
\end{proof}

Now we prove  the result of Kalra and Gumber \cite[Theorem 3.1]{Kalra}.
\begin{corollary}\cite[Theorem 3.1]{Kalra}\label{c3}
Let $G$ be a finite non-abelian $p$-group. Then $Aut_{c}(G) = Autcent(G)$ if and only if $Aut_{c}(G)$ is isomorphic to $ Hom(G/Z(G),\gamma_{2}(G))$ and $\gamma_{2}(G) = Z(G)$.
\end{corollary}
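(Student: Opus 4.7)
The plan is to apply Theorem \ref{t4} with $n=2$ in both directions, using Lemma \ref{l3} to supply the exponent relation that is available only in the class-$2$ setting. The reverse implication is immediate: given $\gamma_2(G)=Z(G)$ and $Aut_c(G)\cong Hom(G/Z(G),\gamma_2(G))$, the converse portion of Theorem \ref{t4} directly yields $Aut_c(G)=Autcent(G)$.

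For the forward direction, suppose $Aut_c(G)=Autcent(G)$. Theorem \ref{t4}(1) with $n=2$ gives $\gamma_2(G)\leq Z(G)$ and $d(\gamma_2(G))=d(Z(G))$, and together with $G$ being non-abelian this forces the nilpotency class to be exactly $2$; Lemma \ref{l3} then yields $exp(\gamma_2(G))=exp(G/Z(G))$. If already $\gamma_2(G)=Z(G)$, then $var(\gamma_2(G),Z(G))=1<p\leq exp(G/\gamma_2(G))$, so the hypothesis of Theorem \ref{t4}(2) holds and supplies the isomorphism $Aut_c(G)\cong Hom(G/Z(G),\gamma_2(G))$.

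The main obstacle is ruling out the possibility $\gamma_2(G)\subsetneq Z(G)$. Combining the inequalities from the proof of Theorem \ref{t4}(1) with the refinement $|Aut_c(G)|=|Hom_c(G/Z(G),\gamma_2(G))|$ supplied by Corollary \ref{c2} produces the chain
\[
|Hom(G/\gamma_2(G),Z(G))|\geq |Hom(G/\gamma_2(G),\gamma_2(G))|\geq |Hom(G/Z(G),\gamma_2(G))|\geq |Aut_c(G)|,
\]
and the assumption, together with Theorem \ref{t1}, forces equality throughout. From the middle equality, every homomorphism $G/\gamma_2(G)\to \gamma_2(G)$ vanishes on $Z(G)/\gamma_2(G)$, so by the standard description of the intersection of kernels of all homomorphisms into a fixed finite abelian $p$-group, $Z(G)/\gamma_2(G)\subseteq exp(\gamma_2(G))\cdot (G/\gamma_2(G))$. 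The outermost equality combined with Lemma \ref{l1} (in the subcase $\gamma_2(G)\neq Z(G)$) forces $exp(G/\gamma_2(G))\leq var(\gamma_2(G),Z(G))$; coupling this with $var(\gamma_2(G),Z(G))\leq exp(\gamma_2(G))=exp(G/Z(G))\leq exp(G/\gamma_2(G))$ (via Lemmas \ref{l3} and \ref{l4}) collapses the chain to $exp(G/\gamma_2(G))=exp(\gamma_2(G))$. But then multiplication by $exp(\gamma_2(G))$ annihilates $G/\gamma_2(G)$, so $exp(\gamma_2(G))\cdot (G/\gamma_2(G))=0$, which forces $Z(G)/\gamma_2(G)=0$ and contradicts $\gamma_2(G)\subsetneq Z(G)$. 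Therefore $\gamma_2(G)=Z(G)$, and the isomorphism follows as in the first subcase.
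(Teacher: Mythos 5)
Your proof is correct, and in outline it takes the same route as the paper: apply Theorem \ref{t4} with $n=2$ in both directions and verify the exponent hypothesis $exp(G/\gamma_{2}(G))>var(\gamma_{2}(G),Z(G))$ using Lemma \ref{l3} (the class is exactly $2$, so $exp(\gamma_{2}(G))=exp(G/Z(G))$) together with Lemma \ref{l4}. Where you genuinely diverge is in how that hypothesis is verified. The paper does it in one line by asserting the strict inequality $var(\gamma_{2}(G),Z(G))<exp(\gamma_{2}(G))$; as a statement about abelian $p$-groups alone this is not automatic, since $var(\gamma_{2}(G),Z(G))=p^{n_r}$ can equal $exp(\gamma_{2}(G))=p^{n_1}$ whenever $n_r=n_1$ (for instance when the top cyclic factors already differ). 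You instead split into the cases $\gamma_{2}(G)=Z(G)$ (where $var=1$ and the hypothesis is trivial) and $\gamma_{2}(G)\subsetneq Z(G)$, and in the latter case derive a contradiction: the chain $|Hom(G/\gamma_{2}(G),Z(G))|\geq|Hom(G/\gamma_{2}(G),\gamma_{2}(G))|\geq|Hom(G/Z(G),\gamma_{2}(G))|\geq|Aut_{c}(G)|$ collapses to equalities by Theorem \ref{t1} and Corollary \ref{c2}, the middle equality shows every homomorphism $G/\gamma_{2}(G)\to\gamma_{2}(G)$ kills $Z(G)/\gamma_{2}(G)$, hence $Z(G)/\gamma_{2}(G)$ lies in the $exp(\gamma_{2}(G))$-th power subgroup of $G/\gamma_{2}(G)$, and Lemma \ref{l1} plus the exponent comparison forces $exp(G/\gamma_{2}(G))=exp(\gamma_{2}(G))$, making that power subgroup trivial --- a contradiction. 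Every step checks out (the image of $Hom(G/Z(G),\gamma_{2}(G))\hookrightarrow Hom(G/\gamma_{2}(G),\gamma_{2}(G))$ is exactly the set of homomorphisms vanishing on $Z(G)/\gamma_{2}(G)$, and $\gamma_{2}(G)$ is abelian because it is central). The payoff is that your argument fully justifies the one delicate case that the paper's one-line proof glosses over, at the cost of being considerably longer.
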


\begin{proof}
Note that $var(\gamma_{2}(G),Z(G)) < exp(\gamma_{2}(G)) =  exp(G/Z(G)) \leq exp(G/\gamma_{2}(G))$. Now the result follows
from Theorem \ref{t4}.
\end{proof}

\begin{lemma}\label{l2}
Let $n\geq 2$ and let $G$ be a finite non-abelian $p$-group such that $Aut_{Z(G)}^{\gamma_{n}(G)}(G) = Autcent(G)$. Then $G$ is purely non-abelian and $\gamma_{n}(G)\leq Z(G)$.
\end{lemma}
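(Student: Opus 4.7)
The plan is to prove the two assertions in the order they are stated. First I would establish $\gamma_n(G)\le Z(G)$ by feeding inner automorphisms coming from $\gamma_{n-1}(G)$ into the hypothesis; this mirrors the opening move of the proof of Theorem \ref{t4}(1). Then I would rule out a nontrivial abelian direct factor by a cardinality comparison: Theorem \ref{c1} lets me transfer $|Aut_{Z(G)}^{\gamma_n(G)}(G)|$ to the purely non-abelian part, which turns out to be strictly smaller than $|Autcent(G)|$ by Adney--Yen (Theorem \ref{t1}) and Lemma \ref{l6}.

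For step 1, fix $x\in\gamma_{n-1}(G)$ and consider the inner automorphism $\sigma_{x^{-1}}\colon a\mapsto x^{-1}ax$. It fixes $Z(G)$ pointwise, and for every $g\in G$ we have $\sigma_{x^{-1}}(g)=g[g,x]\in g\,\gamma_n(G)$ since $[g,x]\in[G,\gamma_{n-1}(G)]=\gamma_n(G)$. Hence $\sigma_{x^{-1}}\in Aut_{Z(G)}^{\gamma_n(G)}(G)=Autcent(G)$, so $[g,x]=g^{-1}\sigma_{x^{-1}}(g)\in Z(G)$ for every $g\in G$. Since such commutators generate $\gamma_n(G)$, we obtain $\gamma_n(G)\le Z(G)$.

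For step 2, suppose to the contrary that $G=H\times A$ with $H$ purely non-abelian and $A$ a nontrivial abelian direct factor. Since $G$ is non-abelian, $H$ is non-abelian, hence $Z(H)\ne 1$. Step 1, combined with $Z(G)=Z(H)\times A$ and $\gamma_n(G)=\gamma_n(H)\times\{1\}$, gives $\gamma_n(H)\le Z(H)$. Consequently, every $\alpha\in Aut_{Z(H)}^{\gamma_n(H)}(H)$ satisfies $\alpha(h)h^{-1}\in\gamma_n(H)\le Z(H)$, i.e. $\alpha\in Autcent(H)$, whence $|Aut_{Z(H)}^{\gamma_n(H)}(H)|\le|Autcent(H)|$. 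Applying Theorem \ref{c1} and Lemma \ref{l6} then yields
$$|Aut_{Z(G)}^{\gamma_n(G)}(G)|=|Aut_{Z(H)}^{\gamma_n(H)}(H)|\le|Autcent(H)|<|Autcent(G)|,$$
where the strict inequality comes from the factor $|Hom(A,Z(H))|\ge p$ appearing in Lemma \ref{l6} (nontrivial since $A$ is a nontrivial abelian $p$-group and $Z(H)$ contains elements of order $p$). This contradicts the hypothesis.

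The one place that requires care is the inclusion $Aut_{Z(H)}^{\gamma_n(H)}(H)\subseteq Autcent(H)$: this is what powers the counting argument, and it is precisely where step 1 pays off by supplying $\gamma_n(H)\le Z(H)$. Once that is in hand, the rest is bookkeeping with the cited lemmas.
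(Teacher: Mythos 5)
Your proposal is correct and follows essentially the same route as the paper: the inner-automorphism argument for $\gamma_n(G)\le Z(G)$ (the paper invokes "the similar argument as in Theorem \ref{t4}"), followed by the counting contradiction via Theorem \ref{c1} and Lemma \ref{l6}, with the same source of strictness ($|Hom(A,Z(H))|>1$). You have merely filled in the details the paper leaves implicit, such as checking $\sigma_{x^{-1}}\in Aut_{Z(G)}^{\gamma_n(G)}(G)$ and the inclusion $Aut_{Z(H)}^{\gamma_n(H)}(H)\subseteq Autcent(H)$.
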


\begin{proof}
Let us assume that $Aut_{Z(G)}^{\gamma_{n}(G)}(G) = Autcent(G)$. Then by the similar argument as in Theorem \ref{t4}, we get $\gamma_n(G) \leq Z(G)$.
Next, we show that $G$ is purely non-abelian. Suppose $G$ is not purely non-abelian, say $ G = H\times A$ where $H$ is a purely non-abelian subgroup and $A$ is a non-trivial abelian direct factor of $G$. Note that $\gamma_n(H) \leq Z(H)$. Then by using Lemma \ref{l6} and Theorem \ref{c1}, we get $|Autcent(G)|>|Autcent(H)|\geq |Aut_{Z(H)}^{\gamma_{n}(H)}(H)|=|Aut_{Z(G)}^{\gamma_{n}(G)}(G)|=|Autcent(G)|$, which is absurd. Hence $G$ is purely non-abelian.
\end{proof}

The following theorem is a particular case of \cite[Corollary C]{Maly}. Here we have provided a different proof.

\begin{theorem}\label{t5}
Let $G$ be a finite non-abelian $p$-group and let $n\geq 2$. Let $Aut_{Z(G)}^{\gamma_{n}(G)}(G) = Autcent(G)$. Then
\begin{enumerate}[$(1)$]
\item $\gamma_{n}(G)\leq Z(G)$ and $d(\gamma_n(G)) = d(Z(G))$.
\item If $exp(G/\gamma_2(G))>var(\gamma_{n}(G),Z(G))$, then  $\gamma_{n}(G) = Z(G)$.
\end{enumerate}
Conversely, if $\gamma_{n}(G)=Z(G)$, then $exp(G/\gamma_2(G))>var(\gamma_{n}(G),Z(G))$ and
$Aut_{Z(G)}^{\gamma_{n}(G)}(G) = Autcent(G)$
\end{theorem}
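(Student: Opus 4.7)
The approach mirrors that of Theorem \ref{t4}, with $Aut_c^{n-1}(G)$ replaced by $Aut_{Z(G)}^{\gamma_n(G)}(G)$. First I would invoke Lemma \ref{l2} to conclude that $G$ is purely non-abelian and $\gamma_n(G) \leq Z(G)$; this immediately yields the first half of (1) and supplies the hypothesis needed to apply Adney--Yen.

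To prove $d(\gamma_n(G)) = d(Z(G))$, I plan to set up the same cardinality squeeze used in the proof of Theorem \ref{t4}. For $f \in Aut_{Z(G)}^{\gamma_n(G)}(G)$, applying Theorem \ref{t6}(1) with $M = \gamma_n(G)$ (central by the previous step) and $N = \{1\}$ yields a homomorphism $\alpha_f \in Hom(G, \gamma_n(G))$, and Lemma \ref{l7}(2) shows that $f \mapsto \alpha_f$ is injective. Since $\gamma_n(G)$ is abelian, this gives
\[
|Aut_{Z(G)}^{\gamma_n(G)}(G)| \leq |Hom(G/\gamma_2(G), \gamma_n(G))|.
\]
Combined with Theorem \ref{t1} and the remark following it, which together give $|Autcent(G)| = |Hom(G/\gamma_2(G), Z(G))|$, the hypothesis $Aut_{Z(G)}^{\gamma_n(G)}(G) = Autcent(G)$ forces
\[
|Hom(G/\gamma_2(G), Z(G))| \leq |Hom(G/\gamma_2(G), \gamma_n(G))|,
\]
while the reverse inequality is automatic from $\gamma_n(G) \subseteq Z(G)$. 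Thus equality of these two finite hom-groups holds, and Lemma \ref{l1} then yields $d(\gamma_n(G)) = d(Z(G))$, completing (1). For (2), under $exp(G/\gamma_2(G)) > var(\gamma_n(G), Z(G))$, the ``$\gamma_n(G) \neq Z(G)$'' case of Lemma \ref{l1} is ruled out, so $\gamma_n(G) = Z(G)$.

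For the converse, I would assume $\gamma_n(G) = Z(G)$. This makes $G$ purely non-abelian, since any non-trivial abelian direct factor $A$ in a decomposition $G = H \times A$ would satisfy $A \subseteq Z(G)$ but $A \cap \gamma_n(G) = \{1\}$, contradicting $\gamma_n(G) = Z(G)$. It also forces $var(\gamma_n(G), Z(G)) = 1 < p \leq exp(G/\gamma_2(G))$, the last inequality because a non-abelian $p$-group has non-trivial abelian quotient $G/\gamma_2(G)$. Theorem \ref{t6}(2) applied with $M = \gamma_n(G)$ and $N = Z(G)$ gives $|Aut_{Z(G)}^{\gamma_n(G)}(G)| = |Hom(G/Z(G), \gamma_n(G))|$, while Theorem \ref{t1} together with its subsequent remark gives $|Autcent(G)| = |Hom(G/\gamma_n(G), Z(G))|$. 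Substituting $\gamma_n(G) = Z(G)$ in both expressions shows the two cardinalities agree, and since $\gamma_n(G) \subseteq Z(G)$ yields the automatic inclusion $Aut_{Z(G)}^{\gamma_n(G)}(G) \subseteq Autcent(G)$, equality of cardinalities upgrades to equality of subgroups.

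The main obstacle, I expect, is the bookkeeping of the various $Hom$-identifications (quotients by $\gamma_2(G)$, by $\gamma_n(G)$, and by $Z(G)$, with targets alternating between $\gamma_n(G)$ and $Z(G)$) so that Lemma \ref{l1} can be invoked cleanly in each direction; beyond this, the argument reuses the template of Theorem \ref{t4} and introduces no essentially new idea.
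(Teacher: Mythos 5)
Your proof is correct, and for statements (1) and (2) it is essentially identical to the paper's: Lemma \ref{l2}, then the cardinality squeeze $|Hom(G/\gamma_2(G),Z(G))| = |Autcent(G)| = |Aut_{Z(G)}^{\gamma_n(G)}(G)| \leq |Hom(G/\gamma_2(G),\gamma_n(G))|$ via Theorem \ref{t6}(1) and the remark after Theorem \ref{t1}, followed by Lemma \ref{l1}. The only divergence is in the converse. The paper proves it without any counting: since $\gamma_n(G)=Z(G)$, it observes directly that $Autcent(G) = Aut_{\gamma_2(G)}^{Z(G)}(G) \subseteq Aut_{\gamma_n(G)}^{Z(G)}(G) = Aut_{Z(G)}^{\gamma_n(G)}(G)$ (because $\gamma_n(G) \subseteq \gamma_2(G)$), with the reverse inclusion immediate from $g^{-1}f(g)\in\gamma_n(G)=Z(G)$, so Adney--Yen and purely-non-abelian-ness are never needed in that direction. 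You instead establish only the easy inclusion and upgrade it to equality by comparing $|Aut_{Z(G)}^{\gamma_n(G)}(G)| = |Hom(G/Z(G),\gamma_n(G))|$ (Theorem \ref{t6}(2)) with $|Autcent(G)| = |Hom(G/\gamma_n(G),Z(G))|$ (Theorem \ref{t1}, which forces you to first verify $G$ is purely non-abelian, and your verification of that is fine). Both routes work; the paper's is shorter and avoids the extra hypotheses of Adney--Yen, while yours has the mild advantage of recording the explicit isomorphism $Aut_{Z(G)}^{\gamma_n(G)}(G)\cong Hom(G/Z(G),\gamma_n(G))$ along the way.
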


\begin{proof}
\begin{enumerate}
\item Let us assume that $Aut_{Z(G)}^{\gamma_{n}(G)}(G) = Autcent(G)$. Then by Lemma \ref{l2}, $G$ is purely non-abelian and $\gamma_{n}(G)\leq Z(G)$. As $G$ is purely non-abelian, by Theorem \ref{t1}, we get $|Autcent(G)| =|Hom(G/\gamma_{2}(G), Z(G))|$.

Now for $f \in Aut_{Z(G)}^{\gamma_{n}(G)}(G)\subseteq Aut^{\gamma_{n}(G)}(G) $,  by Theorem \ref{t6}(1), if we take $M = \gamma_n(G)$ and $N=\{1\}$, the map $\alpha_f \in Hom(G, \gamma_{n}(G))$. As $ \gamma_2(G) \subseteq ker(\alpha_f)$; $~~~\alpha_f$ can be extended to a homomorphism from $G/\gamma_2(G)$ to $\gamma_{n}(G)$. Therefore, $|Aut_{Z(G)}^{\gamma_{n}(G)}(G)|\leq |Hom(G/\gamma_2(G),\gamma_{n}(G))|$.
Thus we have,
$$
\begin{array}{lcl}
|Hom(G/\gamma_2(G),Z(G)) | &=& | Autcent(G)| \\[2mm]
                                             &=&|Aut_{Z(G)}^{\gamma_{n}(G)}(G)| \\[2mm]
                                             &\leq& |Hom(G/\gamma_2(G),\gamma_{n}(G))|.

\end{array}
$$

Hence, $ Hom(G/\gamma_2(G),Z(G)) = Hom(G/\gamma_2(G),\gamma_{n}(G)).$ Therefore by using Lemma \ref{l1}, $d(\gamma_{n}(G))= d(Z(G))$.\\

\item Since $exp(G/\gamma_2(G))> var(\gamma_{n}(G),Z(G))$, therefore by Lemma \ref{l1}, we get $\gamma_{n}(G)=Z(G)$.
\end{enumerate}

~\\
Conversely, assume that $\gamma_{n}(G)=Z(G)$. Then we have
$$Autcent(G) = Aut^{Z(G)}_{\gamma_{2}(G)}(G)\subseteq Aut^{Z(G)}_{\gamma_{n}(G)}(G) = Aut_{Z(G)}^{\gamma_{n}(G)}(G).$$
Thus $Autcent(G)\subseteq Aut_{Z(G)}^{\gamma_{n}(G)}(G)$. Hence $Aut_{Z(G)}^{\gamma_{n}(G)}(G)=Autcent(G)$.

Since $\gamma_n(G) = Z(G)$,  $exp(G/\gamma_2(G))> var(\gamma_{n}(G),Z(G))$.
\end{proof}

The result of  Rai \cite[Theorem B(1)]{Rai} follows from the following corollary.
\begin{corollary}\label{c4}
Let $G$ be a finite non-abelian $p$-group. Then $Aut_{Z(G)}^{\gamma_{2}(G)}(G) = Autcent(G)$ if and only if $\gamma_{2}(G) = Z(G)$.
\end{corollary}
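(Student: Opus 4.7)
The approach is to derive the corollary directly from Theorem \ref{t5} specialized to $n=2$. The converse direction is immediate: if $\gamma_2(G) = Z(G)$, then the converse half of Theorem \ref{t5} yields $Aut_{Z(G)}^{\gamma_2(G)}(G) = Autcent(G)$, and the accompanying exponent/$var$ condition is trivial since $var(\gamma_2(G),Z(G)) = 1$ while $exp(G/\gamma_2(G)) > 1$ (as $G$ is non-abelian). So the entire content of the corollary lies in the forward direction, which reduces to verifying the hypothesis of Theorem \ref{t5}(2) automatically when $n = 2$.

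For the forward direction, assume $Aut_{Z(G)}^{\gamma_2(G)}(G) = Autcent(G)$. I would first apply Theorem \ref{t5}(1) to obtain $\gamma_2(G) \leq Z(G)$ and $d(\gamma_2(G)) = d(Z(G))$; since $G$ is non-abelian, this forces $G$ to have nilpotency class exactly $2$. Now Lemma \ref{l3} (in class $2$) gives $exp(\gamma_2(G)) = exp(G/Z(G))$, and Lemma \ref{l4} applied to the inclusion $\gamma_2(G) \subseteq Z(G)$ gives $exp(G/Z(G)) \leq exp(G/\gamma_2(G))$. Combining these with the observation $var(\gamma_2(G), Z(G)) < exp(\gamma_2(G))$, one assembles the chain
\[
var(\gamma_2(G), Z(G)) < exp(\gamma_2(G)) = exp(G/Z(G)) \leq exp(G/\gamma_2(G)),
\]
which verifies $exp(G/\gamma_2(G)) > var(\gamma_2(G),Z(G))$. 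Theorem \ref{t5}(2) then delivers $\gamma_2(G) = Z(G)$.

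The delicate step in the above plan is justifying $var(\gamma_2(G), Z(G)) < exp(\gamma_2(G))$. When $\gamma_2(G) = Z(G)$ there is nothing to prove, since $var = 1$. When $\gamma_2(G) \subsetneq Z(G)$, the bare definition of $var$ in terms of cyclic invariants alone does not force strictness (one could a priori have $\gamma_2(G)$ and $Z(G)$ both cyclic with $var = exp(\gamma_2(G))$). To exclude this I would feed back the full hypothesis $|Aut_{Z(G)}^{\gamma_2(G)}(G)| = |Autcent(G)|$: combining $|Autcent(G)| = |Hom(G/\gamma_2(G), Z(G))|$ from Theorem \ref{t1} (valid because $G$ is purely non-abelian by Lemma \ref{l2}) with $|Aut_{Z(G)}^{\gamma_2(G)}(G)| = |Hom(G/Z(G), \gamma_2(G))|$ from Theorem \ref{t6}(2), and then invoking Lemma \ref{l1} on the forced identity $|Hom(G/\gamma_2(G),\gamma_2(G))| = |Hom(G/\gamma_2(G),Z(G))|$ that already appears in the proof of Theorem \ref{t5}(1), rules out the equality case. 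This case analysis is the only genuine work beyond a direct citation of Theorem \ref{t5}.
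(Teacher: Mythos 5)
Your route is the same as the paper's: verify the chain $var(\gamma_{2}(G),Z(G)) < exp(\gamma_{2}(G)) = exp(G/Z(G)) \leq exp(G/\gamma_{2}(G))$ and then invoke Theorem \ref{t5}. You are also right to single out the first, strict, inequality as the delicate point: it is not a consequence of the definition of $var$ alone. For instance, for the central product of an extraspecial group of order $p^{3}$ with $C_{p^{2}}$ one has $\gamma_{2}(G)\cong C_{p}\subsetneq Z(G)\cong C_{p^{2}}$, $d(\gamma_{2}(G))=d(Z(G))=1$, and $var(\gamma_{2}(G),Z(G))=p=exp(\gamma_{2}(G))$. (The paper's own one-line proof simply asserts the strict inequality; the real content of the corollary lies in excluding this equality case.)

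However, the repair you propose does not do the job. Applying Lemma \ref{l1} to the identity $Hom(G/\gamma_{2}(G),\gamma_{2}(G))=Hom(G/\gamma_{2}(G),Z(G))$ returns exactly the dichotomy ``$\gamma_{2}(G)=Z(G)$, or $d(\gamma_{2}(G))=d(Z(G))$ and $exp(G/\gamma_{2}(G))\leq var(\gamma_{2}(G),Z(G))$''. Combined with your chain of inequalities, the second alternative forces $exp(G/\gamma_{2}(G))=var(\gamma_{2}(G),Z(G))=exp(\gamma_{2}(G))=exp(G/Z(G))$, which is precisely the configuration you need to rule out, and Lemma \ref{l1} yields no contradiction there; so the assertion that this ``rules out the equality case'' is unjustified. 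What actually excludes it is a quantitative use of $|Hom(G/Z(G),\gamma_{2}(G))|=|Hom(G/\gamma_{2}(G),Z(G))|$ (available from Theorem \ref{t1}, Lemma \ref{l2} and Theorem \ref{t6}(2)): writing each side as $\prod_{i,j}p^{\min(\,\cdot\,,\,\cdot\,)}$ over cyclic invariants, every factor on the left is at most the corresponding factor on the right (the invariants of the quotient $G/Z(G)$ are bounded by those of $G/\gamma_{2}(G)$, and those of the subgroup $\gamma_{2}(G)$ by those of $Z(G)$), so equality of orders forces term-by-term equality; on the other hand $|Z(G)|/|\gamma_{2}(G)|=|G/\gamma_{2}(G)|/|G/Z(G)|$ shows that a proper inclusion $\gamma_{2}(G)\subsetneq Z(G)$ produces some invariant of $Z(G)$ strictly larger than the corresponding one of $\gamma_{2}(G)$ and simultaneously some invariant of $G/\gamma_{2}(G)$ strictly larger than the corresponding one of $G/Z(G)$, making one factor on the left strictly smaller than its counterpart on the right. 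An argument of this kind (or some other genuine exclusion of the equality case) is what is missing from your plan as written.
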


\begin{proof}
Note that $var(\gamma_{2}(G),Z(G)) < exp(\gamma_{2}(G)) =  exp(G/Z(G)) \leq exp(G/\gamma_{2}(G))$. Now  the proof follows
from Theorem \ref{t5}.

\end{proof}

\vspace{0.5cm} \noindent {\bf ACKNOWLEDGEMENT}

I would like to thank the referee for his/her valuable comments, specially for suggesting Lemma 6 and Theorem 3.3 for better presentation of the article.

\end{document}